\documentclass[reqno,11pt]{amsart}
\usepackage{amssymb,amsfonts,amsbsy,setspace}

    \addtolength{\textwidth}{2cm}
    \addtolength{\hoffset}{-1cm}
\makeatletter


\everymath{\displaystyle} 

 \theoremstyle{plain}
 \newtheorem{thm}{Theorem}[section]
 \newtheorem{lem}[thm]{Lemma}
 
 \numberwithin{equation}{section} 
 \numberwithin{figure}{section} 
 \newtheorem{prop}[thm]{Proposition}
 
 \theoremstyle{remark}
 \newtheorem{rmk}[thm]{Remark}

 \newtheorem*{acknowledgement*}{Acknowledgement}
 \theoremstyle{definition}
 \newtheorem{defi}[thm]{Definition}


\def\eps{\varepsilon}
\def\bgamma{{\overline{\gamma}}}
\def\W{\mathcal W}

\newcommand{\R}{\mathbb R}

\newcommand{\Op}{\mathcal{O}}
\newcommand{\N}{\mathbb N}

\DeclareMathOperator{\supp}{spt}
\def\esssup#1{\underset{#1}{\rm ess.sup}\,}

\newcommand{\M}{\mathcal{M}}
\def\res{\lfloor}
\newcommand{\dx}{\,\text{\normalfont d}}
\newcommand{\icm}{{\boldmath \bf $\infty$-cm\ }}

\def\bx{{\overline{x}}}
\def\by{{\overline{y}}}


\usepackage{enumitem}
\usepackage{color}

\renewcommand{\phi}{\varphi}
\usepackage{hyperref}
\newcommand{\id}{\text{\normalfont id}}
\renewcommand{\neg}{\text{neg}}

\renewcommand{\O}{\mathcal{O}}

\makeatletter

\makeatother

\begin{document}

\title[Duality for the one-dimensional $L^\infty$-optimal transport]{A study of the dual problem of the one-dimensional $L^\infty$-optimal transport problem with applications}

\author{Luigi De Pascale}\address{{\bf L.D.P.} Dipartimento di Matematica ed Informatica, 
Universit\'a di Firenze, Viale Morgagni, 67/a - 50134 Firenze, ITALY} 

\email{luigi.depascale@unifi.it}

\author{Jean Louet}\address{{\bf J.L.} CEREMADE, Universit\'e Paris-Dauphine, Place du Mar\'echal de Lattre de Tassigny, 75 775 Paris cedex 16, France; INRIA, MOKAPLAN, 2~rue Simone Iff, 75 012 Paris, France}         

\email{louet@ceremade.dauphine.fr}

\begin{abstract} The Monge-Kantorovich problem for the $\W_\infty$ distance presents several peculiarities. Among them the lack of convexity and then of a direct duality.  
We study in dimension 1 the dual problem introduced by Barron, Bocea and Jensen in~\cite{BarBocJen2017}. We construct a couple of Kantorovich potentials which is non 
trivial in the best possible way. More precisely, we build a potential which is non constant around any point that the restrictable, minimizing plan moves at maximal distance. 
As an application, we show that the set of points which are displaced at maximal distance by a 
``locally optimal'' transport plan is shared by all the other optimal transport plans, and we describe the general structure of all the one-dimensional optimal transport plans.
\end{abstract}

\keywords{Monge-Kantorovich problem, optimal transport problem, cyclical monotonicity}
\subjclass[2000]{49Q20, 49K30, 49J45}
\date{\today}
\maketitle

\section{Introduction}

Given two probability measures $\mu,\nu$ on $\R^d$, the infinite Wasserstein distance between $\mu$ and $\nu$ is defined as 
\begin{equation}\label{KantInf}
\W_\infty (\mu,\nu):=\min_{\gamma \in \Pi(\mu,\nu)} \gamma-\esssup {\R^d\times \R^d} |x-y|,
\end{equation} 
where $\Pi(\mu,\nu)$ denotes the set of positive measures on $\R^d$ whose first and second marginals are $\mu,\nu$ respectively.
This distance is the natural limit as $p \to \infty$ of the more common Kantorovich-Wasserstein distances $\W_p$.

Problem (\ref{KantInf}) above was first studied in \cite{ChaDePJuu2008} where it was observed that, in spite of its proximity with the $\W_p$ distances, 
it presents several peculiarities. 
The two most striking phenomena are the lack of linearity or even convexity of (\ref{KantInf}) with respect to $\gamma$ (in fact the functional $ \gamma \mapsto \|y-x\|_{L^\infty_\gamma(\R^{2d})}$ is only level-convex), and the non-uniqueness of the minimizer: indeed, one may guess that, from any optimal plan~$\gamma$, any small perturbation of~$\gamma$ around a~point $(x,y)$ which is not moved at maximal distance will provide a new and different optimal transport plan. It is therefore necessary to 
find a suitable notion of {\it local solution} of~\eqref{KantInf}. 
The following definition, introduced in~\cite{ChaDePJuu2008}, turned out to be the right one.

\begin{defi} \label{RestSol} A transport plan $\gamma \in \Pi(\mu, \nu)$ is a {\em restrictable solution} of  (\ref{KantInf}) if any positive and nonzero Borel measure  $\gamma'$ on  $\R^{2d}$
that is majorized by $\gamma$ is a solution to the problem
  $$ \inf_{\lambda \in \Pi (\mu',\nu')}  \lambda-\esssup{\R^d\times \R^d} |y-x|,$$
where $\mu'= (\pi_1)_\# \gamma'$ and $\nu'= (\pi_2)_\# \gamma'$.
\end{defi}

It is likely that a small ``local modification'' of a restrictable solution will not be a restrictable solution anymore. 
From the analysis of the problem~\eqref{KantInf} as a limit of the classical optimal transport problem with 
cost $c(x,y)=|y-x|^p$ as $p\to +\infty$, it turns out that the notion of restrictable solution 
is equivalent to the one of {\it infinite cyclical monotonicity} of a transport plan, whose definition is the following:

\begin{defi} \label{DefInfCyclPlan} A transport plan $\gamma \in \Pi (\mu,\nu)$  is 
$\infty$-cyclically-monotone (\icm in the paper) if, 
for all $n \in \N$ and $(x_0, y_0), \dots, (x_n,y_n) \in \supp (\gamma)$, it holds 
\begin{equation} \label{InfCycl} \max\limits_{1\leq i \leq n} \{|x_i-y_i|\} \leq \max\limits_{1\leq i\leq n} \{|x_i-y_{i+1}|\}, \end{equation}
where, as usual, we set $y_{n+1}=y_0$.
\end{defi}

The infinite cyclical monotonicity is the $L^\infty$ version of the $c$-cyclical monotonicity, 
which is a fundamental notion in the classical theory of optimal transportation. 

The equivalence between the two definitions above is proven in~\cite{ChaDePJuu2008}, where it is also proven, under suitable assumptions, 
that \icm transport plans are induced by maps (see Theorems~3.2 and 4.4 therein, and let us also mention~\cite{Jylha} where similar results 
are extended to more general cost functions). 
In fact, using the cyclical monotonicity to prove the existence of an optimal transport map was the first way to get around the absence of a 
satisfying duality theory for the problem~\eqref{KantInf}. Indeed, as we said, the energy
$$ \gamma \mapsto \|y-x\|_{L^\infty_\gamma} $$
is non-linear, and not even convex, with respect to the transport plan $\gamma$, which makes difficult to guess a dual formulation 
of~\eqref{KantInf}, and this is why this alternative method using cyclical monotonicity together with a regularity property of the transport plan has been introduced. 
Later on, similar ``duality free'' methods allowed to prove existence of optimal transport maps for many non-standard cases 
of cost functions, including the Monge's distance cost for arbitrary norms (see for example~\cite{ChaDePJEMS, ChaDePDuke, JimSan, BerPuel}).

However, looking for a (sort of) dual formulation of~\eqref{KantInf} is natural in order to try to find characterization of optimal transport plans: indeed,
in the classical theory of optimal transportation, the duality method plays 
a crucial role not only in the proof of existence of an optimal transport map 
but also in the characterization of this map.
Heuristically, the reasoning which leads to 
the characterization is the following: from the duality formula
$$ \inf\left\{ \int c\dx\gamma \,:\, \gamma\in\Pi(\mu,\nu) \right\} = \sup\left\{ \int \phi\dx\mu + \int\psi\dx\nu \,:\, \phi(x)+\psi(y)\leq c(x,y)\right\}, $$
one can deduce that, if $\gamma$ and $(\phi,\psi)$ are respectively a minimizer and a maximizing pair for these problems,
 the condition $\phi\oplus\psi \leq c$ is saturated (in sense that equality holds)
 on the support of $\gamma$. Therefore, if $c$ and $\phi$ are regular enough, one can deduce
$$ \nabla_1 c(x,y) = \nabla\phi(x) \quad\text{for } \gamma\text{-a.e.}~(x,y)\in(\R^d)^2. $$
If the cost function $c$ is such that this relationship may be inverted, one concludes that an optimal transport maps exists and is characterized by $\phi$ (namely, it is exactly the map $x\mapsto (\nabla_1 c(x,\cdot))^{-1}(\nabla\phi(x))$). Following these arguments, Brenier~\cite{Bre1,Bre2} proved the existence of an optimal map for the quadratic cost and that this map is induced by a convex potential; it must then be a solution of the {\it Monge-Amp\`ere equation}, which serves as basis for a whole regularity theory of optimal transport maps, {\it cf.}~\cite[Chapter~4]{Vil}. Similar results have been generalized to a much larger class of cost functions, first for strictly convex costs with respect to the difference~\cite{GanMcC1995,GanMcC1996,Caf1996} and later on for those satisfying the so-called {\it twist condition}, see \cite{FatFig2010, ChaDeP2014}. \smallskip

These arguments cannot be reproduced for the cost functionals for which the existence of solutions $(\phi,\psi)$ is not known (see for instance~\cite{BerPraPue,LouPraZei} 
for recent progresses in this direction in special cases) and even, for our $L^\infty$ problem~\eqref{KantInf}, the meaning of ``dual formulation'' is itself not clear. Yet, although the functional 
$\gamma\mapsto \|y-x\|_{L^\infty_\gamma}$ is not convex, it is still level-convex 
(in the sense that the level sets of the functional are convex sets and this, sometimes, goes under the name of quasi-convex).
 Then it is still possible to consider a (sort of) duality theory. This theory has been recently introduced and investigated by Barron, Bocea and Jensen in \cite{BarBocJen2017} 
where it is proven that the minimal value of~\eqref{KantInf} is equal to
\begin{equation} \label{PbDualIntro} \inf\limits_{\lambda \geq 0} \left(\sup\left\{ \lambda + \int\phi\dx\mu + \int\psi\dx\nu \,:\, \begin{array}{cc} \phi(x)+\psi(y) \leq 0 \\ \text{whenever } |y-x|\leq \lambda \end{array} \right\}\right), \end{equation}
and that the infimum with respect to $\lambda$ in~\eqref{PbDualIntro} is attained for $\lambda = \W_\infty(\mu,\nu)$ (see the next section for more details). It follows that  $\W_\infty(\mu,\nu)$ is the smallest $\lambda$ such that 
$$\sup\left\{ \int\phi\dx\mu + \int\psi\dx\nu \,:\, \phi(x)+\psi(y) \leq 0  \text{ whenever } |y-x|\leq \lambda \right\}=0.$$
Unfortunately, the supremum with respect to $(\phi,\psi)$ always admits a trivial solution $\phi=\psi=0$. It is  
therefore important to look for the ``most interesting solutions'' of this dual problem, and to study 
what information can be obtained from them about the optimal plans for~\eqref{KantInf}. \medskip

The aim of this paper is to provide, in the one-dimensional case,  a solution of the dual problem~\eqref{PbDualIntro} which is non constant as much as possible. More precisely, if $(\phi,\psi)$ is a solution of~\eqref{PbDualIntro} for $\lambda = \W_\infty$ and $\gamma$ is a fixed optimal transport plan for~\eqref{KantInf}, one can at least formally notice that $\phi$ is locally constant around any point~$x$ which is sent by $\gamma$ at smaller distance that $\W_\infty$ (see below a more precise statement in Prop.~\ref{RkLocConst}). In the present paper, we construct a pair $(\phi,\psi)$ of solutions of~\eqref{PbDualIntro} (see Theorem~\ref{theophipsi}) from a fixed infinite cyclically monotone transport plan $\bgamma$, and such that $\phi$ is a BV function whose derivative is exactly supported on the set of points which are moved by $\bgamma$ at maximal distance: in this sense, our solution has a maximal set of variation. As an application, we will prove that there exists a set of points of 
the support of the source measure which are displaced at maximal distance by all the optimal transport plans for~\eqref{PbDualIntro}; in particular, this set is shared by all the infinitely 
cyclically monotone transport plans. Moreover, we prove that any optimal transport plan for \eqref{KantInf} must exactly coincide with the infinitely cyclically 
monotone one on the set of points which are displaced at maximal distance. 
The proofs of both these results are based on the properties of  the non trivial solution $(\phi,\psi)$ that we construct for  the dual problem.

The potentials we construct only carry informations on the points which are moved at maximal distance by all the optimal plans.
This fact is not surprising: indeed, as we said, the points which are moved at non-maximal distance by a transport plan are in some sense ``not relevant'', since the plan can be modified on a small region around such a point without loosing the optimality. It is then natural that the characterizations provided by this dual formulation only impacts the region 
where modifications could alter the optimality of a transport plans. In other words, the ``zone where $\phi$ carries some  information'' (i.e.  the zone where $\phi$ is not locally constant) is exactly the ``minimal set of maximal displacement of optimal plans'', so that our result is nearly optimal in that sense.

\subsection*{Acknowledgements.} The research of the first author is part of the project 2010-A2TFX2 {\it Calcolo delle Variazioni} funded by the Italian Ministry of Research, and has been partially financed by the {\it Fondi di ricerca di ateneo} of the University of Pisa. The second author acknowledges the support of PGMO project MACRO, funded by EDF and {\it Fondation Math\'ematique Jacques Hadamard}, and of the {\it Laboratoire Ypatia des Sciences Math\'ematiques} (LYSM). He also acknowledges the hospitality of Universities of Pisa and of Florence during several research visits where part of this work has been done.

\section{Notations and the dual problem}

In this section, we quickly collect all the notations and known facts of measure theory and 
optimal transportation that we will use throughout the paper.

Let $X$ and $Y$ be two Polish spaces, and $\mu$, $\nu$ be two positive measures on $X$, $Y$ whose total masses are finite and equal. We denote $\Pi(\mu,\nu)$ the set of {\it transport plans} from $\mu$ to $\nu$, that is, 
the set of positive measures on $X\times Y$ satisfying
$$ \text{for any Borel sets }A \subset X \text{ and } B \subset Y,\; \gamma(A\times Y) = 
\mu(A)\text{ and }\gamma(X\times B) = \nu(B); $$
recall that this constraint can be reformulated as
$$ \text{for any } (u,v) \in C_b(X) \times C_b(Y),\quad \begin{array}{rl} &  \iint u(x)\dx\gamma(x,y) = \int u\dx\mu \\[2mm] \text{and} &  \iint v(y)\dx\gamma(x,y) = \int v\dx\nu. \end{array} $$
In our settings, $X = Y = \R^d$ (and $d=1$ along almost the whole paper) and the ``primal'' problem that we consider is the minimization of the supremal functional \eqref{KantInf} above. We will denote by $\O_\infty(\mu,\nu)$ the set of its minimizers. The definition of an {\it infinitely cyclically monotone transport plan} has been recalled in the introduction, see Definition~\ref{DefInfCyclPlan}. We will use by simplicity the abbreviation ``\icm plan''; recall that, from~\cite[Theorems 3.2 and 4.4]{ChaDePJuu2008}, we know that at least one such  plan exists provided $\mu,\nu$ are both compactly supported in $\R^d$, and that these plans are exactly those which are {\it restrictable solutions} of~\eqref{KantInf}, in sense given by Definition~\ref{RestSol}. \smallskip

The following ``dual problem'' was introduced in Theorem 2.3 and Remark 2.4 of \cite{BarBocJen2017}.

\begin{thm}[Duality formula for the $L^\infty$-optimal transport problem] Let $\mu,\nu \in \mathcal{P}(\R^d)$ be compactly supported. For any $\lambda >0$, denote by $\mathcal{U}_\lambda$ the set of couples $(\phi,\psi) \in L^1_\mu \times L^1_\nu$ 
such that, for $\mu$-a.e.~$x$ and $\nu$-a.e.~$y$, the inequality $\phi(x)+\psi(y) \leq 0$ holds whenever $|y-x|\leq \lambda$. Then, 
$$  \W_\infty(\mu,\nu) = \inf\limits_{\lambda \geq 0} \left( \sup\left\{ \lambda + \int \phi\dx\mu+\int\psi\dx\nu \,:\, (\phi,\psi) \in \mathcal{U}_\lambda\right\}\right), $$
and the infimum with respect to $\lambda$ is attained for $\lambda = \W_\infty(\mu,\nu)$.
\end{thm}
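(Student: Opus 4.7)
The plan is to analyse the inner supremum
$$ S(\lambda) := \sup\Bigl\{ \int \phi\,d\mu + \int \psi\,d\nu \,:\, (\phi,\psi) \in \U_\lambda \Bigr\} $$
in two regimes of $\lambda$, and then minimise $\lambda + S(\lambda)$ over $\lambda \geq 0$. Since $(0,0) \in \U_\lambda$ for every $\lambda$, the lower bound $S(\lambda) \geq 0$ is automatic, so it only remains to compute $S(\lambda)$ above and below the threshold $\W_\infty(\mu,\nu)$.

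In the easy regime $\lambda \geq \W_\infty(\mu,\nu)$, I would fix any minimiser $\gamma \in \O_\infty(\mu,\nu)$, whose existence is classical. Its support lies in $\{|y-x| \leq \W_\infty(\mu,\nu)\} \subseteq \{|y-x| \leq \lambda\}$, so for any $(\phi,\psi) \in \U_\lambda$ the pointwise inequality $\phi(x) + \psi(y) \leq 0$ holds $\gamma$-a.e.; integrating against $\gamma$, whose marginals are $\mu$ and $\nu$, immediately gives $\int \phi\,d\mu + \int \psi\,d\nu \leq 0$, hence $S(\lambda) = 0$. Consequently $\lambda + S(\lambda) = \lambda$ on $[\W_\infty(\mu,\nu),\infty)$, and the value $\W_\infty(\mu,\nu)$ is both reached at $\lambda = \W_\infty(\mu,\nu)$ and is a local minimum at that endpoint.

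The substantive step is to show that $S(\lambda) = +\infty$ as soon as $\lambda < \W_\infty(\mu,\nu)$, which then forces the global infimum to be exactly $\W_\infty(\mu,\nu)$ and attained precisely there. I would proceed by Hahn-Banach separation: by definition of $\W_\infty$, no transport plan in $\Pi(\mu,\nu)$ has support inside $\{|y-x| \leq \lambda\}$, so the pair $(\mu,\nu)$ does not lie in the image of the convex cone
$$ \mathcal{K}_\lambda := \Bigl\{ \gamma \in \M_+(\R^d \times \R^d) \,:\, \supp(\gamma) \subset \{|y-x| \leq \lambda\},\ |\gamma| \leq \mu(\R^d) \Bigr\} $$
under the marginals map $\gamma \mapsto ((\pi_1)_\# \gamma, (\pi_2)_\# \gamma)$. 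The compactness of $\supp(\mu)$ and $\supp(\nu)$ makes $\mathcal{K}_\lambda$ weak-$*$ compact and its image weak-$*$ closed, and a geometric Hahn-Banach theorem then produces $(\phi,\psi) \in C_b(\R^d) \times C_b(\R^d)$ with $\iint (\phi(x) + \psi(y))\,d\gamma(x,y) \leq 0$ for every $\gamma \in \mathcal{K}_\lambda$ and $\int \phi\,d\mu + \int \psi\,d\nu > 0$. Testing the first inequality against Dirac masses at points $(x,y)$ with $|y-x| \leq \lambda$ forces $\phi(x) + \psi(y) \leq 0$ on this set, i.e.~$(\phi,\psi) \in \U_\lambda$; rescaling $(\phi,\psi) \to (t\phi, t\psi)$ with $t \to +\infty$ then drives the supremum to $+\infty$.

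The main obstacle is the rigorous execution of the separation, in particular the weak-$*$ closedness of the image cone and the compatibility of the chosen test functions with the $L^1$ hypothesis defining $\U_\lambda$ (which is easy here because $\mu,\nu$ are compactly supported, so bounded continuous functions sit in the relevant $L^1$ spaces). A cleaner alternative would be to invoke Kellerer's generalisation of Kantorovich duality directly, applied to the lower-semicontinuous cost $c_\lambda(x,y)$ equal to $0$ on $\{|y-x| \leq \lambda\}$ and $+\infty$ elsewhere: its primal value is $0$ when $\lambda \geq \W_\infty(\mu,\nu)$ and $+\infty$ when $\lambda < \W_\infty(\mu,\nu)$, while its dual value is exactly $S(\lambda)$, yielding the same dichotomy without explicit separation.
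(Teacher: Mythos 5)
This theorem is quoted in the paper without proof: it is attributed to Theorem~2.3 and Remark~2.4 of Barron--Bocea--Jensen~\cite{BarBocJen2017}, so there is no internal proof to compare your argument against. Judging it on its own merits, your dichotomy (writing $S(\lambda)$ for the inner supremum, and showing $S(\lambda)=0$ when $\lambda\geq\W_\infty(\mu,\nu)$ while $S(\lambda)=+\infty$ when $\lambda<\W_\infty(\mu,\nu)$) is the right skeleton, and the easy half --- integrating $\phi\oplus\psi\leq 0$ against an optimal plan, which is concentrated on $\{|y-x|\leq\W_\infty\}\subseteq\{|y-x|\leq\lambda\}$ --- is clean and correct.

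In the Hahn--Banach half there is a small inconsistency that you should tidy up. You call $\mathcal{K}_\lambda$ a convex cone, but as you wrote it (with the mass bound $|\gamma|\leq\mu(\R^d)$, which you need for weak-$*$ compactness) it is a bounded convex set and not a cone, so strict separation only yields a continuous linear functional $(\phi,\psi)$ with $\iint(\phi\oplus\psi)\,d\gamma\leq\alpha$ for every $\gamma\in\mathcal{K}_\lambda$ and $\int\phi\,d\mu+\int\psi\,d\nu>\alpha$, for some constant $\alpha\geq 0$ (nonnegative because $0\in\mathcal{K}_\lambda$), not $\alpha=0$ as you claimed. Two easy repairs: either replace $\phi$ by $\phi-\alpha$ before rescaling --- then $(\phi-\alpha,\psi)\in\U_\lambda$ still has strictly positive dual value --- or drop the mass bound, restrict supports to a fixed compact $K\supseteq\supp\mu\cup\supp\nu$, and work with the honest cone $\{\gamma\geq 0:\supp\gamma\subseteq(K\times K)\cap\{|y-x|\leq\lambda\}\}$; closedness of its image under the marginals map still follows by Prokhorov, since a weak-$*$ limit of marginals bounds the masses of the corresponding $\gamma_n$, and a cone forces $\alpha=0$ automatically. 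Either way the conclusion holds, so this is a cosmetic slip rather than a genuine gap. Your alternative route via Kantorovich--Kellerer duality for the lower semicontinuous $\{0,+\infty\}$-valued cost $c_\lambda$ is the slicker option and sidesteps the separation bookkeeping entirely; it would be the proof I would write.
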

For shorter notations we introduce $\lambda_C := \W_\infty(\mu,\nu)$ where $C$ stands for critical.
As a consequence
\begin{equation} \label{DualPb} \max\left\{ \int\phi\dx\mu + \int\psi\dx\nu \,:\, (\phi,\psi) \in \mathcal{U}_{\lambda_C} \right\}=0 \end{equation}
and we observe that, given an optimal transport plan $\gamma$ for the primal problem~\eqref{KantInf}, a pair $(\phi,\psi)$ is optimal for the dual problem~\eqref{DualPb} if and only if 
the equality $\phi(x)+\psi(y)=0$ holds for $\gamma$-a.e.~$(x,y)$.
Such functions $\phi,\psi$ will be then called {\it Kantorovich potentials.} We notice that among the maximizers for problem (\ref{DualPb}) above there are always $\phi\equiv 0$ and $\psi \equiv 0$.
The duality conditions impose a strong constraint on the variability of the Kantorovich potentials which make difficult to find non trivial couples. 
We explain this in the next Proposition.

\begin{prop} \label{RkLocConst}  Let  $(\phi,\psi) \in L^1_\mu \times L^1_\nu$ be Kantorovich potentials and let $\gamma$ be an optimal transport plan. 
Then $\mu$ is concentrated on a set $L$ such that if $\bx \in L$ has the property 
\begin{equation}\label{dp}
\max_{(\bx,y) \in \supp (\gamma)} |\bx-y| =\lambda<\lambda_C,
\end{equation}  
then there exists $\varepsilon>0$ such that $\varphi$ is constant in $B(\bx, \varepsilon)\cap L$
\end{prop}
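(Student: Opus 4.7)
The plan is to squeeze $\varphi$ from above and below near $\bx$ using the dual constraint $\varphi(x)+\psi(y)\leq 0$ on $\{|y-x|\leq\lambda_C\}$ together with the saturation $\varphi(x)+\psi(y)=0$ on $\supp\gamma$. Since these relations only hold almost everywhere, I would first choose $L$ carefully so that the pointwise applications of the dual constraint are legitimate. Let $N_X, N_Y$ be respectively a $\mu$-null and a $\nu$-null set outside of which $\varphi(x)+\psi(y)\leq 0$ holds whenever $|y-x|\leq\lambda_C$; since $\{\varphi(x)+\psi(y)=0\}$ carries full $\gamma$-mass and $\gamma(\R\times N_Y)=\nu(N_Y)=0$, the Borel set $E:=\supp\gamma\cap\{\varphi(x)+\psi(y)=0\}\cap(\R\times(\R\setminus N_Y))$ still satisfies $\gamma(E)=1$. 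Setting $L:=\pi_1(E)\setminus N_X$ then gives $\mu(L)=1$, and to every $x\in L$ one can associate a partner $y_x\notin N_Y$ with $(x,y_x)\in\supp\gamma$ and $\varphi(x)+\psi(y_x)=0$.

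Next, I would establish a local upper-semicontinuity property for the multifunction $x\mapsto\{y:(x,y)\in\supp\gamma\}$ at $\bx$. Because $\mu,\nu$ are compactly supported, so is $\supp\gamma$; hence if no $\eta>0$ worked, one could extract $x_n\to\bx$ and $y_n$ with $(x_n,y_n)\in\supp\gamma$ and $|y_n-\bx|>(\lambda+\lambda_C)/2$, then obtain by compactness a limit point $(\bx,\bar y)\in\supp\gamma$ with $|\bar y-\bx|\geq(\lambda+\lambda_C)/2>\lambda$, contradicting the hypothesis \eqref{dp}. Thus there is $\eta>0$ such that for every $x\in B(\bx,\eta)$ and every $y$ with $(x,y)\in\supp\gamma$, $|y-\bx|\leq(\lambda+\lambda_C)/2$.

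Setting $\varepsilon:=\min(\eta,\lambda_C-\lambda)$, fix any $x'\in B(\bx,\varepsilon)\cap L$. On the one hand, the triangle inequality gives $|x'-y_{\bx}|\leq\varepsilon+\lambda\leq\lambda_C$; since $x'\notin N_X$ and $y_{\bx}\notin N_Y$, the dual inequality yields $\varphi(x')+\psi(y_{\bx})\leq 0$, and subtracting $\varphi(\bx)+\psi(y_{\bx})=0$ produces $\varphi(x')\leq\varphi(\bx)$. On the other hand, the upper semicontinuity gives $|\bx-y_{x'}|\leq(\lambda+\lambda_C)/2<\lambda_C$, so $\varphi(\bx)+\psi(y_{x'})\leq 0$, which combined with $\varphi(x')+\psi(y_{x'})=0$ gives $\varphi(\bx)\leq\varphi(x')$. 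The main delicate point is the measure-theoretic bookkeeping of the first step: one has to arrange that $\bx$, every nearby $x'$, and the partners $y_{\bx}, y_{x'}$ all simultaneously avoid the exceptional null sets on which the dual inequality can fail; once this is done, the semicontinuity bound is what legitimizes the two pointwise applications of the dual constraint.
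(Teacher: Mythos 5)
Your proof is correct and follows essentially the same route as the paper's: construct $L$ as the first projection of a full-$\gamma$-measure set on which the saturation $\varphi(x)+\psi(y)=0$ holds and which avoids the $\mu$- and $\nu$-null exceptional sets for the dual constraint; establish upper semicontinuity of $x\mapsto\max_{(x,y)\in\supp\gamma}|x-y|$ at $\bx$ via closedness (and boundedness) of $\supp\gamma$; and then derive the two inequalities $\varphi(x')\leq\varphi(\bx)$ and $\varphi(\bx)\leq\varphi(x')$ from the admissibility and saturation conditions applied to the respective partners $y_\bx$ and $y_{x'}$. The only cosmetic difference is that the paper obtains the reverse inequality ``by symmetry,'' re-invoking the one-sided estimate with $\bx$ and $\tilde x$ swapped after shrinking $\varepsilon$ below $(\lambda_C-\lambda)/4$, whereas you apply the dual inequality directly to the pair $(\bx,y_{x'})$ using the semicontinuity bound $|\bx-y_{x'}|\leq(\lambda+\lambda_C)/2<\lambda_C$; your version is slightly more explicit and avoids re-choosing the radius.
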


\begin{proof} First we observe that since the distance is continuous and $\supp (\gamma)$ is closed, 
there exists $\varepsilon>0$ such that for every $x \in B(\bx, \varepsilon)$ 
$$\max_{(x,y) \in \supp (\gamma)} |x-y| <\lambda+ \frac{\lambda_C-\lambda}{2}<\lambda_C.$$
Denote by $D_c:=\{(x,y)\in \R^d \times \R^d \ | \ |x-y| \leq \lambda_C \},$ and let 
$\mathcal N_\mu$ and $\mathcal N_\nu$ be such that $\mu(\mathcal N_\mu)=\nu(\mathcal N_\nu)=0$ and 
$$ \varphi(x) +\psi(y) \leq 0 \text{ on } D_c \setminus (\mathcal N_\mu \times \R^d \cup \R^d \times \mathcal N_\nu).$$
Denote by $\Gamma$ the set on which $\gamma$ is concentrated and such that
$$ \varphi (x)+\psi(y) =0 \text{ for all } (x,y) \in \Gamma.$$

Finally let $\hat \Gamma:= \Gamma \setminus  (\mathcal N_\mu \times \R^d \cup \R^d \times \mathcal N_\nu).$
We will prove that $L= \pi_1 (\hat \Gamma)$ has the desired property.

We prove that if $\bx \in L$ is such that  $\max_{(\bx,y) \in \supp (\gamma)} |\bx-y| =\lambda<\lambda_C$ then for all 
$x \in B(\bx, \lambda_C-\lambda) \cap L$ 
$$ \varphi(x) \leq \varphi (\bx),$$
i.e. $\bx$ is a local maximum for $\varphi$ on the set $L$ which has full $\mu$-measure. 
Since $\bx \in L$ there exists $\by$ such that $(\bx,\by)\in \hat \Gamma$ (so  $\bx \not\in \mathcal N_\mu$ nor 
$\by \not\in \mathcal N _\nu$). 
Let $\tilde x \in L\cap B(\bx, \lambda_C-\lambda)$, we have $|\tilde x- \by| \leq \lambda_C$ and again $\tilde x \not\in \mathcal N_\mu$ and $\by \not\in \mathcal N _\nu$ it follows that 
$$ \varphi (\tilde x)+ \psi (\by) \leq 0$$ 
and then  
$$ \varphi(\tilde x) \leq -\psi(\by) = \varphi (\bx).$$
If, in the first step of the proof,  we choose $\varepsilon <\frac{\lambda_C-\lambda}{4}$ and we take $\tilde{x} \in B(\bx, \varepsilon) \cap L$, by symmetry, we also have 
$$\varphi (\bx) \leq \varphi(\tilde x)$$
which concludes the proof.
\end{proof}


\section{Construction of non-trivial potentials}


In this section, we construct, in the one-dimensional case, a couple of Kantorovich potentials $(\phi,\psi)$ which are not locally constant on 
the largest possible set. Our assumptions on the data are that $\mu,\nu$ are two probability measures on $\R$ with compact support and without atoms.

We will use the following notations:
\begin{itemize}
\item as before, we denote by $\lambda_C$ the optimal value of the problem~\eqref{KantInf}, and by $\Op_\infty$ the set of optimizers;
\item for any $\gamma\in\Op_\infty$, we introduce the sets
  $$ \M_\gamma^+ := \left\{ (x,y)\in\supp(\gamma) \,:\, y-x = \lambda_C  \right\}, $$
  $$ \M_\gamma^- := \left\{ (x,y)\in\supp(\gamma) \,:\, y-x = -\lambda_C  \right\},$$
  $$\M_\gamma := \M_\gamma^+\cup\M_\gamma^-  $$
$$ \text{and}\quad M_\gamma^+ = \pi_1(\M_\gamma^+),\; M_\gamma^- = \pi_1(\M_\gamma^-),\; M_\gamma = \pi_1(\M_\gamma), $$
where $\pi_1$ is the projection on the first variable ($\pi_1(x,y) = x$ for any $(x,y)\in\R^2$). We notice that $M_\gamma^+,\ M_\gamma^-$ are, respectively,  
the sets of points which are moved by $\gamma$ at maximal distance to the right or to the left. We will then call $M_\gamma$ the {\it Maximal displacement set of $\gamma$.} 
\end{itemize}
We also observe that $\M_\gamma^+$, $\M_\gamma^-$, $\M_\gamma$ and $M_\gamma^+$, $M_\gamma^-$, $M_\gamma$ are compact subsets of $\R^2$ and $\R$ respectively; moreover, the sets $M_\gamma^+$ and $M_\gamma^-$ 
are never simultaneously empty (and so is not $M_\gamma$).

\begin{rmk} \label{RmkIsolPts} Let $A \subset \R$ be a compact set a point $\bar{x} \in A$ will be called 
{\it right-extreme point of $A$} (resp.~{\it left-extreme point} of $A$) if there exists $\delta >0$ such that 
the interval $]\bar x,\bar x+\delta[$ (resp.~$]\bar x-\delta,\bar x[$) does not intersect $A$. We notice that since $A$ is closed, the set $\R\setminus A$ may be written as
$$ \R\setminus A = \bigcup\limits_i ]a_i,b_i[ $$
where the union is taken on an at most countable set of indexes~$i$. 
In particular, the left-extreme and right-extreme points are all part of the $a_i,b_i$, so that such points are at most countably many: 
since $\mu$ and $\nu$ have no atoms, the sets of left-extreme or right-extreme points of any closed set have always zero mass for $\mu,\nu$. \end{rmk}

\subsection{Properties of the maximal displacement set of \icm plans} 

In this paragraph, we fix an \icm plan $\bar\gamma$. Actually, in the one-dimensional context, it can be proven that such a~$\gamma$ is necessarily induced by a monotone map $T$, in sense that
$$ \gamma = (\text{id}\times T)_\#\mu \quad\text{where $T$ is non-decreasing and } T_\#\mu = \nu $$
and that such a map $T$ is unique up to $\mu$-negligible sets, see~\cite[Chapter 2]{SanOTAM}; the support of $\bgamma$ is then exactly the set of points
$$ \left\{ \left(x,\lim\limits_{\substack{t \to x \\ t<x}} T(t)\right) \,:\, x \in \supp\mu \right\} \bigcup \left\{ \left(x,\lim\limits_{\substack{t \to x \\ t>x}} T(t)\right) \,:\, x \in \supp\mu \right\}  $$
for a well-chosen representative of $T$. However, since the only property that we really use is the formula~\eqref{InfCycl} itself, we will not enter into more details 
concerning the characterization of  $\supp \bgamma$. 
 
We can then prove some additional properties of the maximal displacement set.
\begin{lem}\label{distance}   For any $\bar x \in M_\bgamma^+$, the intersection $]\bar x,\bar x+2\lambda_C[ \cap M_\bgamma^-$ is empty. Similarly, if  $\bar x \in M_\bgamma^-$, 
then $]\bar x-2 \lambda_C,\bar x[ \cap M_\bgamma^+ = \emptyset$.
\end{lem}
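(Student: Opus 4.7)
The plan is to apply the $\infty$-cyclical monotonicity condition~\eqref{InfCycl} directly to just two well-chosen pairs of points in $\supp(\bar\gamma)$, and check that the desired inequality forces the hypothesized separation.

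More precisely, suppose for contradiction that $\bar x \in M_{\bar\gamma}^+$ and that there exists $x' \in M_{\bar\gamma}^-\cap\,]\bar x,\bar x+2\lambda_C[$. By definition of $M_{\bar\gamma}^\pm$, the points $(x_1,y_1):=(\bar x,\bar x+\lambda_C)$ and $(x_2,y_2):=(x',x'-\lambda_C)$ both belong to $\supp(\bar\gamma)$. I would then write down the $\infty$-cyclical monotonicity inequality for $n=2$ with these pairs (with $y_3=y_1$), namely
$$\max\{|x_1-y_1|,|x_2-y_2|\}\leq \max\{|x_1-y_2|,|x_2-y_1|\}.$$
The left-hand side is exactly $\lambda_C$. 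Setting $d:=x'-\bar x\in\,]0,2\lambda_C[$, the right-hand side becomes
$$\max\bigl\{|\bar x - x' + \lambda_C|,\; |x'-\bar x-\lambda_C|\bigr\}=|d-\lambda_C|.$$
I would then conclude by splitting into the two cases $d\geq\lambda_C$ (which would force $d\geq 2\lambda_C$) and $d<\lambda_C$ (which would force $d\leq 0$), both contradicting $0<d<2\lambda_C$.

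The second assertion, concerning $\bar x\in M_{\bar\gamma}^-$, follows by the exact same computation with the roles of $(x_1,y_1)$ and $(x_2,y_2)$ interchanged, or equivalently by the symmetry $x\leftrightarrow -x$ (which preserves $\bar\gamma$'s $\infty$-cyclical monotonicity while swapping $M_{\bar\gamma}^+$ and $M_{\bar\gamma}^-$). I do not expect any real obstacle here: the whole proof is a two-point cyclical monotonicity estimate, and the numerical constraint $2\lambda_C$ appears precisely as the threshold at which $|d-\lambda_C|$ fails to exceed $\lambda_C$.
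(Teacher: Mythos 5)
Your proof is correct and matches the paper's argument essentially verbatim: both apply the $\infty$-cyclical monotonicity inequality \eqref{InfCycl} to the two support points $(\bar x,\bar x+\lambda_C)$ and $(z,z-\lambda_C)$ and derive the contradiction $\lambda_C\leq|d-\lambda_C|$ for $d\in\,]0,2\lambda_C[$. The only difference is that you spell out the case split $d\geq\lambda_C$ / $d<\lambda_C$ where the paper simply observes the right-hand side is immediately seen to be less than $\lambda_C$.
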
  
\begin{proof}  We give a proof of the first case, being the second analogous. 
Assume by contradiction that there exists some $z \in M_\bgamma^-$ such that $\bar x < z <  \bar x+2\lambda_C$. Both $(z,z-\lambda_C)$ and $(\bar x,\bar x+\lambda_C)$ 
belong to the support of $\bgamma$, so that the \icm property implies
\begin{equation} \lambda_C = \max\Big(|(z-\lambda_C) - z|,|(\bar x+\lambda_C)-\bar x|\Big) \leq \max\Big(|(z-\lambda_C) - \bar x|,|(\bar x+\lambda_C)-z|\Big). \label{inf2cmxz} \end{equation}
On the other hand, the fact that $  \bar x < z < \bar x+2\lambda_C $ implies immediately that the right-handside in \eqref{inf2cmxz} is smaller than $\lambda_C$ and this is a contradiction.
 \end{proof} 

\begin{lem} \label{finite} The set $M_{\bar\gamma}^+\cap M_{\bar\gamma}^-$ is finite. \end{lem}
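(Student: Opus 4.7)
The plan is to show that any two distinct points of $M_{\bar\gamma}^+\cap M_{\bar\gamma}^-$ must be separated by a distance of at least $2\lambda_C$; combined with the fact that this set is contained in the compact set $\supp\mu$, finiteness follows immediately. So the argument reduces to proving this quantitative separation property.

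First, suppose we take two distinct points $\bar x_1,\bar x_2\in M_{\bar\gamma}^+\cap M_{\bar\gamma}^-$, with (say) $\bar x_1<\bar x_2$. Since $\bar x_1\in M_{\bar\gamma}^+$, Lemma~\ref{distance} tells us that $]\bar x_1,\bar x_1+2\lambda_C[\,\cap\, M_{\bar\gamma}^-=\emptyset$. Since $\bar x_2\in M_{\bar\gamma}^-$ and $\bar x_2>\bar x_1$, we must have $\bar x_2\geq \bar x_1+2\lambda_C$. (Alternatively, one could apply the symmetric half of the same lemma to $\bar x_2\in M_{\bar\gamma}^-$ and reach the same conclusion.)

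Therefore the set $M_{\bar\gamma}^+\cap M_{\bar\gamma}^-$ is a $2\lambda_C$-separated subset of $\R$. But $M_{\bar\gamma}^+\cap M_{\bar\gamma}^-\subset \pi_1(\supp\bar\gamma)\subset\supp\mu$, and $\supp\mu$ is compact (hence bounded) by hypothesis. A bounded $2\lambda_C$-separated set is necessarily finite, which proves the claim.

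I do not expect any genuine obstacle here: the whole argument is a short two-line consequence of Lemma~\ref{distance} together with the compactness of the supports. The only mild subtlety to check is that $\lambda_C>0$ (so that $2\lambda_C$-separation does force finiteness), but this is automatic whenever $\mu\neq\nu$, and if $\mu=\nu$ then $M_{\bar\gamma}^\pm$ are both empty and the statement is trivial.
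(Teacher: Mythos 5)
Your argument is correct and is essentially the paper's: both invoke Lemma~\ref{distance} to get $2\lambda_C$-separation between points of $M_\bgamma^+\cap M_\bgamma^-$ and then conclude from boundedness of $\supp\mu$. If anything your phrasing (``uniformly $2\lambda_C$-separated'') is slightly cleaner than the paper's ``all points are isolated,'' since uniform separation plus boundedness gives finiteness directly, without invoking closedness of the set.

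One small inaccuracy in your closing remark: if $\mu=\nu$ (so $\lambda_C=0$), the sets $M_\bgamma^\pm$ are \emph{not} empty --- since $\M_\bgamma^\pm=\{(x,y)\in\supp\bgamma: y-x=\pm\lambda_C\}$, taking $\lambda_C=0$ gives $\M_\bgamma^+=\M_\bgamma^-=\supp\bgamma$, hence $M_\bgamma^+\cap M_\bgamma^-=\supp\mu$, which is typically infinite. The lemma (and the paper's proof) is implicitly assuming $\lambda_C>0$, and in the degenerate case the statement actually fails rather than holding trivially; this is a standing assumption one should acknowledge rather than a case one can dispose of as you suggest.
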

\begin{proof} Since $M_{\bar\gamma}^+\cap M_{\bar\gamma}^-$ is included in the support of $\mu$, which is bounded, it is sufficient to prove that all of its points are isolated, and this is true since, by Lemma \ref{distance}, above, if  $\bar{x} \in  M_\gamma^+\cap M_\gamma^-$, then  $(M_\gamma^+\cap M_\gamma^- ) \cap ]\bar{x},\bar x+2\lambda_C[= \emptyset $ and $(M_\gamma^+\cap M_\gamma^- ) \cap ]\bar x-2 \lambda_C,\bar x[=\emptyset$.
\end{proof}

\subsection{Construction of a non-trivial potential}

In this paragraph, starting from an \icm optimal transport plan $\bar\gamma$, 
we introduce a couple $(\phi,\psi)$ of non-trivial solutions of the dual problem \eqref{DualPb}.
The couple enjoys the property that the points around which $\phi$ is not locally constant 
are exactly those that $\bar\gamma$ moves at maximal distance i.e. those of $M_\bgamma$. We will need the following lemma of measure theory.

\begin{lem} \label{LemRhoPM} There exists two positive measures $\rho^+$, $\rho^-$ on $\R$, having finite mass and such that, denoting by $\rho = \rho^+-\rho^-$, the following properties are satisfied:
\begin{enumerate}[label={\normalfont (\roman*)}]
\item $\supp \rho^+=M_\bgamma^+$, $\supp \rho^-=M_\bgamma^-$ and $\supp \rho=M_\bgamma$;
\item for any point $\bx$ of $M_\bgamma^+$ which is a left-extreme or a right-extreme of $M_\bgamma^+$, we have $\rho(\{\bar x\}) >0$.
\end{enumerate} \end{lem}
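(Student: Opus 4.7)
The strategy I would follow is to build both $\rho^+$ and $\rho^-$ as countable weighted sums of Dirac masses, placed on dense subsets of $M_\bgamma^+$ and $M_\bgamma^-$ respectively. The two delicate points are the support of the signed measure $\rho = \rho^+ - \rho^-$, which a priori could be strictly smaller than $M_\bgamma$ because of cancellations on the overlap $F := M_\bgamma^+ \cap M_\bgamma^-$, and condition (ii). Recall that by Lemma~\ref{finite}, $F$ is finite, so only finitely many constraints need to be enforced on this overlap.

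A key preliminary observation is the inclusion $F \subset E^+$, where $E^+$ denotes the (at most countable, by Remark~\ref{RmkIsolPts}) set of left-extreme and right-extreme points of $M_\bgamma^+$. Indeed, if $\bx \in F$, then $\bx \in M_\bgamma^-$ and Lemma~\ref{distance} gives $]\bx - 2\lambda_C, \bx[\, \cap\, M_\bgamma^+ = \emptyset$, which forces $\bx$ to be a left-extreme point of $M_\bgamma^+$. Thus every point of the finite overlap already lies among the extreme points listed in (ii), so the freedom provided by (ii) is exactly what is needed to deal with them.

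Concretely, I would pick a countable set $\{p_n\}_{n \geq 1}$ dense in $M_\bgamma^+$ and containing $E^+$, a countable set $\{q_n\}_{n \geq 1}$ dense in $M_\bgamma^-$, and positive reals $c_n, d_n$ with $\sum_n c_n < \infty$, $\sum_n d_n < \infty$, chosen small enough (which is possible since $F$ is finite) so that $\sum_{n : q_n = \bx} d_n < \sum_{n : p_n = \bx} c_n$ for every $\bx \in F$. Then I would define
\[ \rho^+ := \sum_n c_n \delta_{p_n}, \qquad \rho^- := \sum_n d_n \delta_{q_n}, \]
which are finite positive measures, and density immediately gives $\supp \rho^+ = M_\bgamma^+$ and $\supp \rho^- = M_\bgamma^-$.

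The remaining verifications are routine. For (ii), any $\bx \in E^+$ either is not in $M_\bgamma^-$, so that $\rho^-(\{\bx\}) = 0$ and $\rho(\{\bx\}) = \rho^+(\{\bx\}) > 0$, or belongs to $F$, in which case $\rho^+(\{\bx\}) > \rho^-(\{\bx\})$ by construction. For $\supp \rho = M_\bgamma$, the inclusion $\subset$ is clear; conversely, any $x \in M_\bgamma^+ \setminus F$ admits a neighborhood disjoint from the closed set $M_\bgamma^-$, on which $\rho$ coincides with the positive measure $\rho^+$ whose support contains $x$, hence $x \in \supp \rho$; the symmetric argument handles $x \in M_\bgamma^- \setminus F$; and every $x \in F$ is a positive atom of $\rho$ by the previous step. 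The only genuine obstacle, in my view, is the potential cancellation of $\rho$ on the overlap $F$; the geometric observation $F \subset E^+$ drawn from Lemma~\ref{distance} reduces this to a handful of explicit inequalities that can be imposed by rescaling.
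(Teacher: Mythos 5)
Your proposal is correct and follows essentially the same route as the paper: both build $\rho^+$ and $\rho^-$ as countable weighted sums of Dirac masses placed on dense families in $M_\bgamma^+$ and $M_\bgamma^-$ (arranged to contain the extreme points), both treat the finite overlap $F := M_\bgamma^+ \cap M_\bgamma^-$ (finite by Lemma~\ref{finite}) as a separate finite set where the relative atomic weights must be controlled, and the verifications of (i) and (ii) rest on the same elementary support arguments. What you make explicit, and it is valuable, is the observation via Lemma~\ref{distance} that $F \subset E^+$ (every point of the overlap is automatically a left-extreme point of $M_\bgamma^+$), so condition (ii) already constrains every $z\in F$ and one \emph{must} arrange $\rho^+(\{z\}) > \rho^-(\{z\})$ there, which your rescaling of the weights $d_n$ correctly achieves. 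This inclusion is not spelled out in the paper and in fact exposes a sign slip in the paper's displayed formula, which assigns $\delta_{z_k}$ to $\rho^+$ and $2\delta_{z_k}$ to $\rho^-$, giving $\rho(\{z_k\}) = -1 < 0$ and thus contradicting (ii) precisely for the points you have identified; with the two coefficients reversed (so that $\rho^+$ dominates $\rho^-$ at each $z_k$, as in your construction) the lemma holds, and your proof is sound as written.
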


\begin{proof} By lemma~\ref{finite}, the set $M_\bgamma^+ \cap M_\bgamma^-$ is finite, we denote by $\{z_1,\dots,z_N\}$ the (possibly empty) set of its elements. We also select two at most countable (and also possibly empty) families $(x_i)_i$, $(y_j)_j$ of points of $M_\bgamma^+\setminus M_\bgamma^-$, and of $M_\bgamma^-\setminus M_\bgamma^+$ which are dense in $M_\bgamma^+\setminus M_\bgamma^-$ and $M_\bgamma^-\setminus M_\bgamma^+$ respectively. Moreover, by Remark~\ref{RmkIsolPts}, the left-isolated (resp.~right-isolated) points of $M_\bgamma^+$ are at most countably many, so we can assume that all of them are part of the family $(x_i)_i$ or $(z_k)_k$. It is then straightforward to check that the measures
$$ \rho^+ := \sum\limits_i 2^{-i}\delta_{x_i} + \sum\limits_{k=1}^N \delta_{z_k} \quad \text{and} \quad \rho^- := \sum\limits_j 2^{-j}\delta_{y_j} + \sum\limits_{k=1}^N 2 \delta_{z_k}  $$
satisfy the required property. \end{proof}

Define, now, a candidate pair of Kantorovich potentials.

\begin{defi} For $x,y\in\R$, we define
$$  \phi_r(x) := -\rho((-\infty,x]) \;\text{and}\; \psi_r(y) := \inf\big\{ -\phi_r(x) : x\in[y-\lambda_C,y+\lambda_C]\big\}. $$
\end{defi}

\begin{thm}\label{theophipsi} The functions $\phi_r$ and $\psi_r$ defined above  satisfy the following properties:
\begin{enumerate}[label={\normalfont (\roman*)}]
\item $\phi_r$ has bounded variation (in particular, it has a left-limit and a right-limit at any point) and is right-continuous;
\item \label{PropDeriv} the support of $(\phi_r')^-$ is exactly $M_\bgamma^+$, and the support of $\phi_r'$ is exactly $M_\bgamma$;
\item \label{PropJump} for any point $\bar x \in M_\bgamma^+$ which is left-isolated or right-isolated in $M_\bgamma^+$, we have
$$ \lim\limits_{\substack{x\to\bar x \\ x<\bar x}} \phi(x) > \phi_r(\bar x) ;$$
\item the couple $(\phi_r,\psi_r)$ is a Kantorovich potential, {\it i.e.}~a maximizer of~\eqref{DualPb}.
\end{enumerate}
\end{thm}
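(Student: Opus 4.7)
The plan is to treat parts (i)--(iii) as routine measure-theoretic corollaries of the identification of $\phi_r$ with minus the distribution function of the finite signed measure $\rho = \rho^+ - \rho^-$ furnished by Lemma~\ref{LemRhoPM}, and to concentrate the substantive work on (iv). For (i), $|\rho|(\R) = \rho^+(\R) + \rho^-(\R) < \infty$ gives bounded variation, and right-continuity follows from the continuity of $t \mapsto \rho((-\infty,t])$ along decreasing sequences of sets. For (ii), the distributional derivative of $\phi_r$ is $-\rho$, and the support claims $\supp(\phi_r')^- = M_\bgamma^+$ and $\supp \phi_r' = M_\bgamma$ then translate directly from Lemma~\ref{LemRhoPM}(i). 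For (iii), at any left- or right-isolated point $\bar x$ of $M_\bgamma^+$ one has $\rho(\{\bar x\}) > 0$ by Lemma~\ref{LemRhoPM}(ii), and the jump identity $\phi_r(\bar x^-) - \phi_r(\bar x) = \rho(\{\bar x\})$ yields the strict inequality.

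For (iv), admissibility $(\phi_r,\psi_r) \in \mathcal U_{\lambda_C}$ is immediate from the definition of $\psi_r$: if $|y-x|\leq\lambda_C$ then $x \in [y-\lambda_C,y+\lambda_C]$, so $-\phi_r(x) \geq \psi_r(y)$. Both $\phi_r$ and $\psi_r$ are bounded, hence integrable. Since the supremum in~\eqref{DualPb} equals $0$ (attained by the trivial pair), it suffices to prove $\int\phi_r\dx\mu + \int\psi_r\dx\nu = 0$. Rewriting this quantity as $\int(\phi_r(x)+\psi_r(y))\dx\bgamma$, whose integrand is non-positive $\bgamma$-a.e.~by admissibility (since $\supp\bgamma \subset \{|y-x|\leq\lambda_C\}$), the task reduces to verifying
$$ \phi_r(\bar x) = \sup\bigl\{\phi_r(z): z \in [\bar y-\lambda_C,\bar y+\lambda_C]\bigr\} \qquad \text{for $\bgamma$-a.e.~}(\bar x,\bar y). $$

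The main obstacle, and the only step in which the $\infty$-cyclical monotonicity of $\bgamma$ is actually invoked, is the structural claim that, for every $(\bar x,\bar y) \in \supp\bgamma$,
$$ M_\bgamma^+ \cap\,]\bar y-\lambda_C,\bar x[\, = \emptyset \quad\text{and}\quad M_\bgamma^- \cap\,]\bar x,\bar y+\lambda_C[\, = \emptyset. $$
To prove the second emptiness, suppose $x'' \in M_\bgamma^- \cap\,]\bar x,\bar y+\lambda_C[\,$; then $(x'',x''-\lambda_C) \in \supp\bgamma$, and applying~\eqref{InfCycl} to this pair together with $(\bar x,\bar y)$ yields
$$ \lambda_C = \max\bigl(|\bar x-\bar y|,\lambda_C\bigr) \leq \max\bigl(|\bar x-x''+\lambda_C|,|x''-\bar y|\bigr). $$
However, the chain $\bar y-\lambda_C \leq \bar x < x'' < \bar y+\lambda_C \leq \bar x+2\lambda_C$ forces both quantities on the right to be strictly less than $\lambda_C$, a contradiction; the first emptiness follows by a symmetric argument applied to $(x',x'+\lambda_C)$ for any putative $x' \in M_\bgamma^+ \cap\,]\bar y-\lambda_C,\bar x[\,$.

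Granted these inclusions, $\phi_r$ carries only upward jumps on $[\bar y-\lambda_C,\bar x]$ (contributed by $\rho^-$) and only downward jumps on $[\bar x,\bar y+\lambda_C]$ (contributed by $\rho^+$), hence it is non-decreasing on $[\bar y-\lambda_C,\bar x)$ and non-increasing on $(\bar x,\bar y+\lambda_C]$. Its supremum over the whole interval therefore belongs to $\{\phi_r(\bar x^-),\phi_r(\bar x)\}$ and equals $\phi_r(\bar x)$ precisely when $\rho(\{\bar x\}) \leq 0$. Since $\rho$ has at most countably many atoms while $\mu$ is atomless, this condition holds at $\mu$-a.e.~(hence $\bgamma$-a.e.) point $\bar x$, which completes the proof of (iv).
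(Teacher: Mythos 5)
Your proof of (iv) is organized differently from the paper's. The paper argues by contradiction: assuming $\phi_r(\bar x)+\psi_r(\bar y)<0$, it extracts a point $z$ with $\phi_r(z)>\phi_r(\bar x)$, derives from it a $z'\in M_\bgamma^\pm$, and then runs the $\infty$-cyclical monotonicity inequality through a fairly intricate case analysis, showing that the bad pairs $(\bar x,\bar y)$ are confined to a $\bgamma$-null set via four separate edge cases. You instead isolate a clean structural fact -- for $(\bar x,\bar y)\in\supp\bgamma$, the open intervals $]\bar y-\lambda_C,\bar x[$ and $]\bar x,\bar y+\lambda_C[$ miss $M_\bgamma^+$ and $M_\bgamma^-$ respectively -- and read off the monotonicity of $\phi_r$ on either side of $\bar x$. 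This is a more transparent way to see why $\phi_r$ peaks at $\bar x$, and it makes the role of the $\infty$-cm condition appear exactly once. Your proof of the structural fact is correct.

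There is, however, a real gap at the right endpoint. You claim $\phi_r$ is non-increasing on $(\bar x,\bar y+\lambda_C]$, so that the supremum of $\phi_r$ over $[\bar y-\lambda_C,\bar y+\lambda_C]$ lies in $\{\phi_r(\bar x^-),\phi_r(\bar x)\}$, and you then only rule out the exceptional set via the condition $\rho(\{\bar x\})>0$, which is $\mu$-negligible. But the emptiness $M_\bgamma^-\cap\,]\bar x,\bar y+\lambda_C[\,=\emptyset$ says nothing about the closed endpoint: $\bar y+\lambda_C$ may well lie in $M_\bgamma^-$ (indeed $(\bar y+\lambda_C,\bar y)\in\supp\bgamma$ is perfectly compatible with $(\bar x,\bar y)\in\supp\bgamma$ under $\infty$-cm), and if $\rho^-$ charges $\{\bar y+\lambda_C\}$ then $\phi_r$ has an \emph{upward} jump there, so $\phi_r(\bar y+\lambda_C)$ may exceed $\phi_r(\bar x)$, breaking your conclusion. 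This second exceptional set is indexed by $\bar y$, not $\bar x$, and your "$\mu$-a.e.~$\bar x$" argument does not touch it. The fix is short -- $\rho^-$ has at most countably many atoms, so $\{\bar y : \rho^-(\{\bar y+\lambda_C\})>0\}$ is countable and $\nu$-null, giving $\phi_r(\bar y+\lambda_C)-\phi_r(\bar x)=-\rho^+(]\bar x,\bar y+\lambda_C])\leq 0$ for $\nu$-a.e.~$\bar y$ -- but it must be said. (This endpoint is precisely the "equality in~\eqref{Case2Ineg2}" sub-case that the paper handles explicitly; the corresponding left endpoint $\bar y-\lambda_C$ is harmless since a downward jump there only lowers $\phi_r$.) With that sentence added, your argument is complete.
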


In some sense, Property~\ref{PropDeriv} of Theorem 3.6 means that the potential $\phi_r$ is ``optimal with respect to $\bgamma$'': indeed, Prop.~\ref{RkLocConst} suggests that, for 
any potential $\phi$ having bounded variation, the support of $\phi'$ should not be larger than $M_\gamma$, as $\phi$ should be locally constant anywhere else. Moreover, we will 
that the smallest possible $M_\gamma$ among the optimal plans $\gamma$ is actually achieved by the \icm transport plans ({\it cf.}~Th.~\ref{ThmMgamma} below).

\begin{proof}[Proof of Theorem~\ref{theophipsi}.] By definition $\phi_r$ is the cumulative distribution function of the measure $-\rho$, then it is a BV function 
whose distributional derivative is the measure $-\rho$; this and the definition of $\rho$ classically imply
the properties~(i) and~(ii). As for~(iii), let $\bar x$ be an isolated point of $M_\bgamma^+$. Then, for any $x<\bar x$,
$$ \phi_r(\bar x)-\phi_r(x) = -\rho(]x,\bar x]) = -\rho(\{\bar x\}) - \rho(]x,\bar x [). $$
By Property~(ii) of Lemma~\ref{LemRhoPM} we have $\rho(\{\bar x\})>0$ while the other term in the last equality vanishes as $x\to \bar x$, which proves~\ref{PropJump}.

It remains to show that $(\phi_r,\psi_r)$ is a pair of Kantorovich potentials. First of all, from the definition of $\psi_r$, it follows that
$$ \text{for any }(x,y)\in\R^2\text{ with } |y-x|\leq\lambda_C, \quad \phi_r(x) + \psi_r(y) \leq 0.  $$
This proves that $(\phi_r,\psi_r)$ is an admissible couple for the dual problem. We moreover claim that
\begin{equation} \phi_r(\bar x) + \psi_r(\bar y) = 0 \;  \text{for } \bgamma\text{-a.e.}~(x,y); \label{egphipsi} \end{equation}
more precisely, we will prove that, if $(\bar x,\bar y) \in \supp\mu\times\supp\nu$ is such that~\eqref{egphipsi} does not hold, then either $\bar{x}$ belongs to an at most countable subset of $\supp\mu$ or $\bar{y}$ belongs to an at most countable subset of $\supp\nu$. This will be enough to prove~(iv) since it implies that
$$ \int \phi_r\dx\mu + \int \psi_r\dx\nu = \int (\phi_r(x)+\psi_r(y))\dx\bgamma(x,y) = 0,$$
so that $(\phi_r,\psi_r)$ is optimal.

Let $(\bar x,\bar y)$ be a point of $\supp(\bgamma)$ such that $\phi_r(\bar x)+\psi_r(\bar y) < 0$. Then:
$$ -\phi_r(\bar x) > \psi_r(\bar y) = \inf\big\{ -\phi_r(x)  \,:\, x\in[\bar y-\lambda_C,\bar y+\lambda_C ]\big\}.  $$
We deduce that there exists $z\in [\bar y-\lambda_C,\bar y+\lambda_C]$ such that $\phi_r(z) > \phi_r(\bar x)$, and we distinguish the cases $z<\bar{x}$ and $z>\bar{x}$.

{\it First case: $z<\bar{x}$.}  In this case, 
the definition of~$z$ reads
$$ \phi_r(\bar x)-\phi_r(z) = -\rho(]z,\bar x]) <0.  $$
In particular, there exists $z' \in ]z,\bar x]$ which belongs to $M_\bgamma^+$, so 
that $(z',z'+\lambda_C) \in \supp(\bgamma)$.  Then, notice that
\begin{equation}\label{firstmax} 
\max\big\{ |\bar y - \bar x|,|(z'+\lambda_C)-z'|\} = \lambda_C 
\end{equation}
and consider  
\begin{equation}\label{secmax}  
\max\big\{|\bar y-z'|,\  |(z'+\lambda_C)-\bar x|\}.
\end{equation}
By the cyclical monotonicity of $\bar\gamma$, the $\max$ in (\ref{secmax}) must be at least $\lambda_C$ too, meaning that either $|\bar y-z'|$ or $|(z'+\lambda_C)-\bar x|$ is larger or equal to $\lambda_C$. Now:
\begin{itemize}
\item From the fact that $z' \in (z,\bar x]$ and that $z,\bar x \in [\bar y-\lambda_C,\bar y+\lambda_C]$, we have
$$ -\lambda_C <z'-\bar y \leq \bar x-\bar y\leq \lambda_C. $$
Therefore, the inequality  $|\bar y-z'|\geq\lambda_C$ is only possible if $z'=\bar x = \bar y+\lambda_C$. This implies that $\bar x$ belongs to $M_\bgamma^+\cap M_\bgamma^-$, which is finite by Lemma~\ref{finite}.
\item On the other hand, using again that $z'\in ]z,\bar x]$ and $z,\bar x \in [\bar y-\lambda_C,\bar y+\lambda_C]$, we observe
$$ - \lambda_C \leq \bar y-\bar x < z'+\lambda_C-\bar x \leq \lambda_C. $$
The inequality $|(z'+\lambda_C)-\bar x| \geq \lambda_C$ can then only hold if $\bar x=z'$. In this case, both $(\bar x, \bar y)$ and $(\bar x, \bar x+ \lambda _C)$ belong to the support of $\bgamma$. As a consequence of \icm condition, for every $(x,y') \in \supp \bgamma$ with $\bar y < y' < \bar x + \lambda_C$ it must hold $x=\bar x$ (in fact, if $x< \bar x $ the couples $(x,y')$ and $(\bar x, \bar y)$ would violate the condition while if 
$\bx < x$ the bad couples would be $(x,y')$ and $(\bar x, \bar x + \lambda _C)$). It follows that 
\begin{align*} 0=\mu(\{\bar x\}) & = \bar\gamma( \{\bar x\}\times \R) \\
  &\geq \bar\gamma( \{\bar x\}\times ]\bar y,\bar x+\lambda_C[) \\
  &= \bar\gamma( \R \times ]\bar y,\bar x+\lambda_C[ ) \\
  &= \nu(]\bar y,\bar x+\lambda_C[). \end{align*}
Then $\bar y$ is a right-isolated point of $\supp\nu$ which, as discussed in Remark \ref{RmkIsolPts}, are at most countably many.
\end{itemize}

{\it Second case: $z>\bar{x}$.} This case is treated in a very similar way as the previous one. First of all, we start by deducing from the inequality $\phi_r(z)>\phi_r(\bar x)$ that there exists $z'\in(\bar x,z]$ such that $(z',z'-\lambda_C) \in\supp(\bgamma)$. Then, by the \icm condition,
\begin{equation} \lambda_C = \max\big\{ |\bar y-\bar x|,|(z'-\lambda_C)-z'|\big\} \leq \max\big\{|\bar y -z'|,|(z'-\lambda_C)-\bar x|\}, \label{inegzprime} \end{equation}
so that either $|\bar y-z'|$ or $|z'-\lambda_C-\bar x|$ is at least equal to $\lambda_C$. On the other hand, the fact that $z'\in(\bar x,z]$ with $\bar y-\lambda_C \leq \bar x \leq z \leq \bar y+\lambda_C$ enforces
\begin{align} & -\lambda_C < z'-\lambda_C-\bar x \leq \bar y-\bar x \leq \lambda_C.  \label{Case2Ineg1} \\
  \text{and} & -\lambda_C < z'-\bar y \leq \lambda_C  \label{Case2Ineg2} \end{align}
Therefore, the condition \eqref{inegzprime} is only satisfied in the two following cases:
\begin{itemize}
\item there is equality in the two last inequalities of~\eqref{Case2Ineg1}: in this case, we have simultaneously $z' = \bar y+\lambda_C$ and $\bar x = \bar y-\lambda_C$. Therefore, $(\bar y-\lambda_C,\bar y)$ and $(\bar y+\lambda_C,\bar y)$ both belong to the support of $\bgamma$: this means that $\bar y \in M_{\tilde \gamma}^+\cap M_{\tilde\gamma}^-$, where $\tilde\gamma$ is the ``symmetric plan'' of $\bgamma$, that is
$$ \tilde\gamma = \tau_\#\bgamma \quad \text{with } \tau(x,y) = (y,x).  $$
The transport plan $\tilde\gamma$ (which belongs to $\Pi(\nu,\mu)$) being itself \icm, Lemma~\ref{finite} also applies, yielding finiteness of $M_{\tilde\gamma}^+\cap M_{\tilde\gamma}^-$. Therefore, the equality case in~\eqref{Case2Ineg1} is only possible for a finite number of $\bar y$.
\item If there is equality in the last inequality of~\eqref{Case2Ineg2}, we deduce that both $(\bar x,\bar y)$ and $(\bar y+\lambda_C, \bar y)$ belong to the support of $\bgamma$, with $\bar x < \bar y+\lambda_C$.  As in the previous case, we conclude that $\bar x$ is a boundary point of one of the connected components of $\R\setminus (\supp\mu)$, which are at most countably many thanks to Remark~\ref{RmkIsolPts}, concluding the proof of~(iv).\qedhere
\end{itemize}
\end{proof}

\section{Application: existence of a minimal set of 
maximal displacements} \label{SectAppl}

In this section, we will use the pair of Kantorovich potentials provided by Theorem~\ref{theophipsi} to get some informations on general 
optimal transport plans for~\eqref{KantInf} (not necessarily \icm~). Since any optimal plan $\gamma$ is concentrated on the set
$$ \Big\{ (x,y) \in \R^2 \,:\, \phi_r(x)+\psi_r(y) = 0 \text{ and } |y-x|\leq \lambda_C\Big\}, $$
it is natural to start by collecting some properties of this set. This is the goal of the following proposition: 
roughly speaking, it suggests that, if $\bar x$ belongs to $M_\bgamma^+$, its only possible ``image'' by an optimal transport plan $\gamma$ is $\bar x+\lambda_C$.

\begin{prop} \label{PropBr} For any $x$, denote by
$$ B_r(x) := \Big\{ y\in [x-\lambda_C,x+\lambda_C] \,:\, \phi_r(x)+\psi_r(y) = 0 \Big\}. $$
Let $\bar x \in M_\bgamma^+$. Then, for any $x$ such that $\bar x\leq x <\bar x+2\lambda_C$, we have $B_r(x) \subseteq [\bar x+\lambda_C,x+\lambda_C]$. In particular, the set $B_r(\bar x)$ is reduced to $\{\bar x+\lambda_C\}$.
\end{prop}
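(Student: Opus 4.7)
The plan is a proof by contradiction: assume there exists $y\in B_r(x)$ with $y<\bar x+\lambda_C$, and build some $z\in[y-\lambda_C,y+\lambda_C]$ with $\phi_r(z)+\psi_r(y)>0$, which would contradict the admissibility of $(\phi_r,\psi_r)$ for the dual problem.

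First I would locate $\bar x$ inside the window used to define $\psi_r(y)$. The inequalities $y\geq x-\lambda_C\geq\bar x-\lambda_C$ together with $y<\bar x+\lambda_C$ give $\bar x\in(y-\lambda_C,y+\lambda_C]$. By definition of $\psi_r$ one has $\psi_r(y)\leq -\phi_r(\bar x)$, and combined with $y\in B_r(x)$ (which reads $\psi_r(y)=-\phi_r(x)$) this yields $\phi_r(\bar x)\leq\phi_r(x)$. The reverse inequality comes from Lemma~\ref{distance}: since $(\bar x,\bar x+2\lambda_C)\cap M_\bgamma^-=\emptyset$, on this interval $\rho$ coincides with $\rho^+\geq 0$, so $\phi_r$ is non-increasing on $[\bar x,\bar x+2\lambda_C)$ and $\phi_r(\bar x)\geq\phi_r(x)$ because $x$ lies there. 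Hence $\phi_r(\bar x)=\phi_r(x)$.

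The heart of the argument is then to exhibit some $z<\bar x$, still in $[y-\lambda_C,y+\lambda_C]$ (which is possible because $y-\lambda_C<\bar x$), with $\phi_r(z)>\phi_r(\bar x)$: this gives $\phi_r(z)+\psi_r(y)=\phi_r(z)-\phi_r(\bar x)>0$, the desired contradiction. I would split into two subcases. If $\bar x$ is left-isolated in $M_\bgamma^+$---a situation which covers the finite set $M_\bgamma^+\cap M_\bgamma^-$ thanks to Lemmas~\ref{distance} and~\ref{finite}---Property~\ref{PropJump} of Theorem~\ref{theophipsi} directly provides $\lim_{t\uparrow\bar x}\phi_r(t)>\phi_r(\bar x)$, so any $z$ just below $\bar x$ suits. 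Otherwise $\bar x\notin M_\bgamma^-$, and closedness of $M_\bgamma^-$ makes $\rho^-$ vanish on a left-neighborhood of $\bar x$; the density of the atoms of $\rho^+$ in $M_\bgamma^+\setminus M_\bgamma^-$ (from the construction in Lemma~\ref{LemRhoPM}) then produces atoms of $\rho^+$ arbitrarily close to $\bar x$ from the left, and choosing $z$ just below such an atom ensures $\rho((z,\bar x])=\rho^+((z,\bar x])>0$ and hence $\phi_r(z)>\phi_r(\bar x)$.

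The equality $B_r(\bar x)=\{\bar x+\lambda_C\}$ will then follow by specializing the inclusion to $x=\bar x$ (which yields $B_r(\bar x)\subseteq\{\bar x+\lambda_C\}$), together with the identification $\psi_r(\bar x+\lambda_C)=-\phi_r(\bar x)$ coming from the fact, established along the argument, that $\phi_r$ attains its supremum on $[\bar x,\bar x+2\lambda_C]$ at $\bar x$ itself. I expect the main technical obstacle to be the non-isolated subcase of the key step: there one must exploit the precise atomic nature of $\rho^+$ constructed in Lemma~\ref{LemRhoPM}, rather than the sole support statement $\supp\rho^+=M_\bgamma^+$.
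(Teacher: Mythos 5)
Your proof follows essentially the same route as the paper: derive $\phi_r(\bar x)\geq\phi_r(x)$ from Lemma~\ref{distance}, observe that $[y-\lambda_C,\bar x[$ sits in the window defining $\psi_r(y)$, and then exhibit $z<\bar x$ with $\phi_r(z)>\phi_r(\bar x)$, splitting according to whether $\bar x$ is left-isolated in $M_\bgamma^+$ (jump from property~\ref{PropJump}) or not. Two small remarks. First, your closing prediction that the non-isolated subcase ``must exploit the precise atomic nature of $\rho^+$'' is a misconception: the bare support statement $\supp\rho^+=M_\bgamma^+$ already gives $\rho^+(]z,\bar x])>0$ for every $z<\bar x$, since $\bar x$ is a left-cluster point of $\supp\rho^+$ and $]z,\bar x[$ is an open neighbourhood of some point of that support; this is exactly what the paper uses (with a $\tilde x\in M_\bgamma^+$ chosen close to $\bar x$, invoking Lemma~\ref{distance} at $\tilde x$ rather than closedness of $M_\bgamma^-$ to kill $\rho^-$ — an equally valid variant). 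Second, your proposed derivation of the equality $B_r(\bar x)=\{\bar x+\lambda_C\}$ via ``$\phi_r$ attains its supremum on $[\bar x,\bar x+2\lambda_C]$ at $\bar x$'' is not justified at the right endpoint: Lemma~\ref{distance} gives $\rho^-=0$ on the \emph{open} interval $]\bar x,\bar x+2\lambda_C[$ only, so $\rho^-$ could have an atom at $\bar x+2\lambda_C$ and $\phi_r$ could exceed $\phi_r(\bar x)$ there. The paper's own proof in fact stops at the inclusion $B_r(\bar x)\subseteq\{\bar x+\lambda_C\}$, which is all that is used in Theorem~\ref{ThmMgamma}, so the safest reading of ``reduced to'' is the containment, and the extra claim you try to add is both unnecessary and incompletely argued.
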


\begin{proof} Let us fix $\bar x\in M_\bgamma^+$ and select $x \in [\bar x,\bar x+2\lambda_C[$. By definition, all the elements of $B_r(x)$ are at most equal to $x+\lambda_C$, thus we only have to prove that $B_r(x)$ does not contain any $y$ which is smaller than $\bar x+\lambda_C$. Consider then $y$ with $x-\lambda_C \leq y <\bar x+\lambda_C$, and let us prove that the equality $\phi_r(x)+\psi_r(y)=0$ can not hold. We observe that, by Lemma \ref{distance}, if $x>\bar x$ then $\rho^-(]\bar x,x]) = 0$, so that
\begin{equation} \phi_r(\bar x) \geq \phi_r(x)  \label{phirxbarre} \end{equation}
which is of course also true when $x = \bar x$. On the other hand, by definition of $\psi_r$ and since $[y-\lambda_C,\bar x[\subset[y-\lambda_C,y+\lambda_C]$, we also have 
\begin{equation} \psi_r(y) 
 \leq \inf\Big\{ -\phi_r(z) \,:\, y-\lambda_C \leq z < \bar x\Big\}. \label{inegpsir} \end{equation}

From~\eqref{phirxbarre} and~\eqref{inegpsir}, it is clear that proving that
\begin{equation} \exists z\in[y-\lambda_C,\bar x[,\, \phi_r(z) > \phi_r(\bar x) \label{existsz} \end{equation}
would be enough to conclude that $\psi_r(y)<-\phi_r(x)$. 
We prove \eqref{existsz} by separating two cases.

\begin{itemize}
\item {\it \underline{First case:} $\bar x$ is a left-isolated point of $M_\bgamma^+$}. In this case, Property~\ref{PropJump} of Theorem \ref{theophipsi} ensures
$$ \lim\limits_{\substack{z \to \bar x\\ z<\bar x}} \phi_r(z) > \phi_r(\bar x) $$
which is clearly enough to find the desired~$z$.

\item {\it \underline{Second case:} $\bar x$ is not a left-isolated point of $M_\bgamma^+$.} Let us select 
some $\tilde x \in ]\bar x-\lambda_C,\bar x[ \cap M_\bgamma^+$.  By applying Lemma \ref{distance} to $\tilde x$, we know that $]\tilde x,\bar x]\cap M_\bgamma^-=\emptyset$. From this fact and since 
$\bar x$ is a left-cluster point of $M_\bgamma^+$, we deduce
$$ \rho^+(]\tilde x,\bar x]) > 0 \quad \text{and} \quad \rho^-(]\tilde x,\bar x]) = 0 $$
which implies $\phi_r(\tilde x)>\phi_r(\bar x)$, as expected. \qedhere
\end{itemize}
\end{proof}

We are now able to state the next main result of this paper, which in particular asserts the existence of a ``minimal set of maximal displacements'', which is shared by all the optimal transport plans:

\begin{thm} \label{ThmMgamma} Let $\gamma, \bgamma \in \Op_\infty$ be optimal transport plans and assume that 
$\bgamma$ satisfies the \icm-condition. 
Then
$$M_\bgamma^+ \subseteq M_\gamma^+ \quad \text{and} \quad M_\bgamma^-\subseteq M_\gamma^-.$$
In other words, there are two compact sets $M^+$ and $M^-$ which are "minimal sets of maximal displacement for optimal plans", in sense that, for any optimal plan $\gamma$,
$$ M^+ \subseteq M_\bgamma^+ \quad\text{and}\quad M^-\subseteq M_\bgamma^- $$
with equality if $\gamma$ is an \icm plan (and in particular, all the \icm plans have same set of maximal displacements).
\end{thm}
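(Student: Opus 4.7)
The plan is to use the pair $(\phi_r,\psi_r)$ constructed in Theorem~\ref{theophipsi} as a common Kantorovich pair for \emph{every} optimal plan, not just for $\bgamma$. Indeed, for any $\gamma\in\Op_\infty$ the inequality $\phi_r(x)+\psi_r(y)\leq 0$ holds $\gamma$-a.e.\ (since $|y-x|\leq\lambda_C$ $\gamma$-a.e.), and because $\int(\phi_r(x)+\psi_r(y))\dx\gamma = \int\phi_r\dx\mu+\int\psi_r\dx\nu = 0$, equality must hold $\gamma$-a.e. Thus $\gamma$ is concentrated on
\[E:=\bigl\{(x,y)\in\R^2:\ |y-x|\leq\lambda_C,\ \phi_r(x)+\psi_r(y)=0\bigr\},\]
and disintegrating along $\mu$ (resp.\ $\nu$), the conditional $\gamma_x$ is concentrated in $B_r(x)$ (resp.\ $\gamma^y$ in the analogous $y$-slice of $E$) for a.e.\ base point.

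To prove $M_{\bgamma}^+\subseteq M_\gamma^+$, fix $\bar x\in M_{\bgamma}^+$ and write $\bar y:=\bar x+\lambda_C$; the aim is to show that every neighborhood of $(\bar x,\bar y)$ carries positive $\gamma$-mass. Proposition~\ref{PropBr} gives $B_r(x)\subseteq[\bar y,\,x+\lambda_C]$ for every $x\in[\bar x,\bar x+2\lambda_C)$, so integrating this inclusion against the $\mu$-disintegration yields
\[\gamma\bigl([\bar x,\bar x+\eps)\times[\bar y,\bar y+\eps)\bigr)=\mu\bigl([\bar x,\bar x+\eps)\bigr),\]
which is positive as soon as $\bar x$ is not a right-extreme of $\supp\mu$. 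The main obstacle is the opposite case, where $\mu$ vanishes on some $(\bar x,\bar x+\eps_0)$. To handle it I would switch to the $\nu$-side via the contrapositive of Proposition~\ref{PropBr}: any $(x,y)\in E$ with $y<\bar y$ must have $x<\bar x$ (otherwise $x\in[\bar x,\bar x+2\lambda_C)$ and $y\in B_r(x)\subseteq[\bar y,\infty)$, contradiction), while $|y-x|\leq\lambda_C$ also forces $x>y-\lambda_C$. Thus, for $y\in(\bar y-\eps,\bar y)$ the $y$-slice of $E$ is contained in $(\bar x-\eps,\bar x)$, and disintegrating now against $\nu$ gives
\[\gamma\bigl((\bar x-\eps,\bar x)\times(\bar y-\eps,\bar y)\bigr)=\nu\bigl((\bar y-\eps,\bar y)\bigr).\]

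It then remains to show, still in this degenerate case, that $\bar y$ is a left-cluster point of $\supp\nu$. Since $\mu$ is atomless and $\bar x\in\supp\mu$ is a right-extreme, $\bar x$ is left-accumulated by $\supp\mu$; using the explicit structure of $\supp\bgamma$ recalled just before Lemma~\ref{distance}, the monotone map $T$ inducing $\bgamma$ satisfies $T(x_n)\to T(\bar x^-)=\bar y$ along any $x_n\uparrow\bar x$ in $\supp\mu$, with $T(x_n)\leq\bar y$. If $T(x_n)=\bar y$ held along a subsequence, monotonicity of $T$ would force $T\equiv\bar y$ on an interval of positive $\mu$-mass, contradicting the atomlessness of $\nu$; hence $T(x_n)<\bar y$ eventually and $\bar y$ is indeed left-accumulated by $\supp\nu$. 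The inclusion $M_{\bgamma}^-\subseteq M_\gamma^-$ follows by a completely symmetric argument, e.g.\ by constructing the left-continuous companion $\phi_l(x):=-\rho((-\infty,x))$ and repeating the analysis, or by applying what precedes to the transposed \icm plan $\tau_\#\bgamma\in\Pi(\nu,\mu)$ with $\tau(x,y)=(y,x)$. The ``minimal sets'' of the last assertion are then $M^\pm:=M_{\bgamma}^\pm$ for any fixed \icm plan $\bgamma$; applying the two inclusions in both directions between any pair of \icm plans shows that this definition is independent of the choice of $\bgamma$.
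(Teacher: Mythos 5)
Your proof is correct and uses the same core tools as the paper --- the pair $(\phi_r,\psi_r)$ of Theorem~\ref{theophipsi} together with Proposition~\ref{PropBr}, which force any optimal $\gamma$ to avoid the quarter plane $[\bar x,+\infty)\times(-\infty,\bar x+\lambda_C]$ --- but the key technical step (the degenerate case where $\mu$ vanishes just to the right of $\bar x$) is handled by a genuinely different argument. The paper proceeds by contradiction: assuming $\bar x\in M_\bgamma^+\setminus M_\gamma^+$, Step~II deduces from the quarter-plane property and the small $\gamma$-null square around $(\bar x,\bar x+\lambda_C)$ that \emph{both} $\mu([\bar x,\bar x+\delta])=0$ and $\nu([\bar x+\lambda_C-\delta,\bar x+\lambda_C])=0$; Step~III then shows that these two vanishings, together with the fact that $\bgamma$ is atomless and concentrated in $\{y\leq x+\lambda_C\}$, would make $\bgamma$ vanish near $(\bar x,\bar x+\lambda_C)$, contradicting $\bar x\in M_\bgamma^+$. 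You instead prove positive $\gamma$-mass near $(\bar x,\bar x+\lambda_C)$ directly, splitting according to whether $\mu$ charges $[\bar x,\bar x+\eps)$ (then the $\mu$-disintegration plus Prop.~\ref{PropBr} concentrates that mass in $[\bar y,\bar y+\eps)$) or not, and in the second case invoking the monotone map $T$ inducing $\bgamma$ to show $\bar y$ is left-accumulated by $\supp\nu$. Your degenerate-case argument works, but two claims deserve an explicit line of justification: first, $T(\bar x^-)=\bar y$ (true because, $\mu$ vanishing to the right of $\bar x$, the only way $(\bar x,\bar y)$ can lie in $\supp\bgamma$ is via the left limit of $T$); second, that $T\equiv\bar y$ on some $[x_0,\bar x)$ would contradict atomlessness of $\nu$ (this needs $\mu([x_0,\bar x))>0$, which holds because $\bar x\in\supp\mu$ and $\mu$ is atomless so $\bar x$ is left-accumulated by $\supp\mu$). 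The paper's route avoids naming $T$ and uses only the half-plane concentration of $\bgamma$, which is slightly shorter; your route is more hands-on. For the inclusion $M_\bgamma^-\subseteq M_\gamma^-$ the paper reflects through the origin via $\chi(x,y)=(-x,-y)$ rather than transposing with $\tau(x,y)=(y,x)$, but both reductions work (one must check that transposition preserves the \icm{} property, as the paper itself does elsewhere).
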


\begin{proof} First we consider some consequences of~Prop.~\ref{PropBr} for $\gamma$. \smallskip

{\it \underline{Step I.} For any $\bar x \in M_\bgamma^+$, the quarter of plane $Q := [\bar x,+\infty) \times (-\infty,\bar x+\lambda_C]$ has zero mass for $\gamma$.} To prove this result, we first observe that, by the optimality of $\gamma$ and of $(\phi_r,\psi_r)$ it follows that $\gamma$ is concentrated on the~set
$$ B := \Big\{ (x,y) \in \R^2 \,:\, |y-x|\leq \lambda_C \text{ and } \phi_r(x)+\psi_r(y) = 0 \Big\} $$
which can be written as $\Big\{ (x,y) \,:\, y \in B_r(x)\Big\}$. Now, for $x \geq \bar x$ and $y \in B_r(x)$ we have:
\begin{itemize}
\item if $x < \bar x+2\lambda_C$, then Prop.~\ref{PropBr} ensures that $y \geq \bar x+\lambda_C$;
\item if $x \geq x+2\lambda_C$, the same holds since $|y-x|\leq \lambda_C$.
\end{itemize}

This exactly means that $B$ has no intersection with the interior of the quarter of plane~$Q$, which has therefore zero mass for $\gamma$; since moreover the lines $\{\bar x\}\times \R$ and $\R \times \{\bar x+\lambda_C\}$ have also zero mass for $\gamma$ (because $\mu$ and $\nu$ are non atomic), the Step~I is proven. \smallskip

{\it \underline{Step II:} if there exists $\bar x \in M_\bgamma^+\setminus M_\gamma^+$, then such an~$\bar x$ satisfies $\mu([\bar x,\bar x+\delta]) = \nu([\bar x+\lambda_C-\delta,\bar x+ \lambda_C]) = 0$ for $\delta >0$ small enough.} Let $\bar x$ be such a point; since $\bar x \notin M_\gamma^+$, we can select $\delta >0$ such that the square with side-length $2\delta$ and centered in the point $(\bar x,\bar x+\lambda_C)$ has zero $\gamma$-measure. Without loss of generality we may also assume that $0<\delta<\lambda_C$. Since $\mu$ is the first marginal of $\gamma$, we then have
$$ \mu([\bar x,\bar x+\delta]) = \gamma([\bar x,\bar x+\delta]\times \R) .$$
Since $\bar x \in M_\bgamma^+$ the result of Step~I applies and gives $\gamma([\bar x,\bar x+\delta] \times (-\infty,\bar x+\lambda_C])=0$, so that
$$ \gamma([\bar x,\bar x+\delta]\times \R) = \gamma([\bar x,\bar x+\delta] \times [\bar x+\lambda_C,+\infty)). $$
Moreover, since $\gamma$ is an optimal transport plan, we have $|y-x|\leq \lambda_C$ for $\gamma$-a.e.~$(x,y)$; in particular, for $\gamma$-a.e.~$(x,y)$ with $x \leq \bar x+\delta$, we have $y \leq \bar x+\delta+\lambda_C$. Consequently,
$$ \gamma([\bar x,\bar x+\delta] \times [\bar x+\lambda_C,+\infty)) = \gamma([\bar x,\bar x+\delta] \times [\bar x+\lambda_C,\bar x +\lambda_C+\delta]), $$
which is zero since,  by assumption on $\bar x$ and $\delta$, the square $[\bar x-\delta, \bx+ \delta]\times [\bar x+\lambda_C -\delta, \bx+\lambda_C+\delta]$ has zero mass for $\gamma$. The three last equalities imply that $\mu([\bar x,\bar x+\delta]) = 0$. The proof of the other equality is similar, and may be described as follows:
$$ \begin{array}{ll}
\nu([\bar x+\lambda_C-\delta, \bar x+\lambda_C]) \! & = \gamma(\R\times [\bar x+\lambda_C-\delta,\bar x+\lambda_C]) \; \text{since } \nu = (\pi_2)_\#\gamma \\[2mm]
 & = \gamma((-\infty,\bar x] \times [\bar x+\lambda_C-\delta,\bar x+\lambda_C] ) \; \text{by Step~I} \\[2mm]
 & = \gamma([\bar x-\delta,\bar x]\times [\bar x+\lambda_C-\delta,\bar x+\lambda_C] ) \; \text{since } |y-x|\leq\lambda_C \; \gamma\text{-a.e.} \\[2mm]
 & = 0 \; \text{by assumption.} \end{array} \smallskip $$

{\it \underline{Step III:}  $M_\bgamma^+\subset M_\gamma^+$.} By contradiction, assume that there exists $\bar x\in M_\bgamma^+$ which does not belong to $M_\gamma^+$. Since $\bgamma$ is concentrated on the half-space $\Big\{ (x,y) \in \R^2 \,:\, y \leq x+\lambda_C\Big\}$, we notice that
$$ \bgamma([\bar x-\delta,\bar x]\times [\bar x+\lambda_C,\bar x+\lambda_C+\delta]) = 0 $$
because, in this square, only the point $(\bar x,\bar x+\lambda_C)$  satisfies $|y-x|\leq \lambda_C$ and $\bgamma$ has no atom (because $\mu$ and $\nu$ don't).

From this property, and from the fact that $\bgamma$ gives no mass to any horizontal or vertical line, it follows
\begin{equation} \begin{array}{ll} \bgamma ([\bx- \delta, \bx+\delta]\times [\bar x+\lambda_C - \delta, \bx+\lambda_C+\delta])  = & \bgamma([\bar x,\bar x+\delta]\times 
[\bx+\lambda_C - \delta,\bx+\lambda_C + \delta ]) \\[2mm]
 &  + \, \bgamma([\bx-\delta,\bx]\times [\bx+\lambda_C-\delta,\bar x+\lambda_C]).
\end{array} \label{bgammarectangles} \end{equation}
But, from the result of Step~II, it follows that
$$ \begin{array}{ll} \bgamma([\bar x,\bar x+\delta]\times [\bar x+\lambda_C \pm \delta]) & \leq \bgamma([\bar x,\bar x+\delta]\times \R) \\[2mm]
 & = \mu([\bar x,\bar x+\delta]) \\[2mm]
 & = 0 \end{array} $$

and that, similarly, $\bgamma([\bar x-\delta,\bar x]\times [\bar x-\lambda_C-\delta,\bar x-\lambda_C])$ is zero. The equality~\eqref{bgammarectangles} therefore implies that the open square with side-length~$2\delta$ and with center~$(\bar x,\bar x+\lambda_C)$ has zero mass for~$\bgamma$, which gives the desired contradiction and concludes Step~III. \smallskip

{\it \underline{Step IV:} $M_\bgamma^- \subset M_\gamma^-$.} Consider the ``opposite transport plans''
$$ \left\{ \begin{array}{l} \bgamma_\neg := \chi_\#\bgamma  \\  \gamma_\neg := \chi_\#\gamma  \end{array}\right. \qquad \text{where} \; \chi(x,y) = (-x,-y).  $$
It is clear that $\bgamma_\neg,\gamma_\neg$ have same marginals which are atomless and compactly supported (they are $\dx\mu(-x)$ and $\dx\nu(-y)$), have same maximal displacement which is still equal to $\lambda_C$ and that $\bgamma_\neg$ is still an \icm plan, and that
$$ x \in M_{\bgamma_\neg}^+ \Longleftrightarrow (-x) \in M_\bgamma^- \quad\text{and}\quad x \in M_{\gamma_\neg}^+ \Longleftrightarrow (-x)\in M_\gamma^-.  $$
We can then apply the result if Steps I-III to the plans $\bgamma_\neg$ and $\gamma_\neg$, which concludes the proof.
\end{proof}

We conclude this paper with a description of the general structure of an optimal transport plans for~\eqref{KantInf}. This result is very similar to the one obtained in~\cite[Prop.~3.1]{DiMLou} in the $L^1$ context. The optimal plans are constrained to coincide with the graph of the translation of $\pm \lambda_C$ on the critical regions $M_\bgamma^\pm$ while on any maximal interval 
$I \subset \R\setminus M_\bgamma$, they are free to ``fill'' the whole region $\{|y-x|\leq \lambda_C\} \cap (I\times I\pm\lambda_C)$ provided the marginal constraint is satisfied. \smallskip

In order to make precise this description, let us introduce some additional notations. Let $a<b$ be the lower and upper bounds of $\supp\mu$ and $c<d$ be those of $\supp\nu$. We can then write
$$ ]a,b[\setminus M_\bgamma = \bigcup\limits_{i\in I} ]a_i,b_i[ $$
where $I$ is an at most countable set and, for each $i$, $]a_i,b_i[$ is a maximal open interval fully included in $]a,b[\setminus M_\bgamma$. In particular, all the points 
$a_i,b_i$ belong to $M_\bgamma \cup \{a,b\}$.

We then set, for each $i$,
$$ c_i = \left\{ \begin{array}{ll} c & \text{if } a_i = a, \\ a_i + \lambda_C & \text{if } a_i \in M_\bgamma^+ \text{ and } a_i\neq a, \\ a_i-\lambda_C & \text{ if } a_i \in M_\bgamma^-,\, a_i \notin M_\bgamma^+ \text{ and } a_i \neq a \end{array} \right.  $$
and similarly
$$ d_i = \left\{ \begin{array}{ll} d & \text{if } b_i = b, \\ b_i - \lambda_C & \text{if } b_i \in M_\bgamma^- \text{ and } b_i\neq b, \\ b_i+\lambda_C & \text{ if } b_i \in M_\bgamma^+,\, b_i \notin M_\bgamma^- \text{ and } b_i \neq b. \end{array} \right.  $$
Roughly speaking, each $c_i$ (resp.~$d_i$) is ``the image'' of $a_i$ (resp.~$b_i$) by the infinitely cyclically monotone transport plan $\bgamma$. In particular, they satisfy the following property.

\begin{lem} \label{LemDisjoint} The intervals $]c_i,d_i[$, $i\in I$, are mutually disjoint. Moreover, for any $i$, $M_\bgamma^+ + \lambda_C$ and 
$M_\bgamma^--\lambda_C$ have also empty intersection with $]c_i,d_i[$. \end{lem}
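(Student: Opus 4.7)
The plan is a case analysis on the location of the endpoints $a_i, b_i$ (and of the candidate points $z \pm \lambda_C$ for the second assertion) with respect to the sets $M_\bgamma^+$, $M_\bgamma^-$ and the extreme points $\{a,b\}$. The only non-trivial geometric input will be Lemma~\ref{distance}, which produces the $2\lambda_C$ spacing between $M_\bgamma^+$ and $M_\bgamma^-$ whenever the two ``types'' of endpoints meet head-on.

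For the disjointness of the $]c_i,d_i[$, I would fix $i \neq j$ with $b_i \leq a_j$ (possible by the maximality of the intervals) and first observe that the inequality $b_i \leq a_j$ forces $b_i \in M_\bgamma^+\cup M_\bgamma^-$ and $a_j \in M_\bgamma^+\cup M_\bgamma^-$ (otherwise one of them would have to equal $a$ or $b$, contradicting $a_i < b_i \leq a_j < b_j$). I would then split into the four subcases coming from the classification of $b_i$ and $a_j$. Three of the four are direct: either $b_i$ and $a_j$ receive a shift of the same sign, so $b_i \leq a_j$ transfers immediately to $d_i \leq c_j$, or the signs arrange themselves favourably ($d_i$ shifted down, $c_j$ shifted up). The only delicate case is $b_i \in M_\bgamma^+\setminus M_\bgamma^-$ and $a_j \in M_\bgamma^-\setminus M_\bgamma^+$, where I will invoke Lemma~\ref{distance} on $b_i$ to get $a_j \geq b_i + 2\lambda_C$, which is exactly what is needed to conclude $b_i + \lambda_C \leq a_j - \lambda_C$.

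For the second assertion I would fix $z \in M_\bgamma^+$ and prove $z + \lambda_C \notin ]c_i, d_i[$, the case of $M_\bgamma^- - \lambda_C$ being symmetric after the reflection $\chi(x,y) = (-x,-y)$ already used in Theorem~\ref{ThmMgamma}. Since $z \in M_\bgamma$ and $M_\bgamma \cap ]a_i,b_i[ = \emptyset$, either $z \leq a_i$ or $z \geq b_i$, and each alternative is treated by the same scheme as in the first part: when $a_i$ (resp.\ $b_i$) is in $M_\bgamma^+$ the conclusion is immediate from $z \leq a_i$ (resp.\ $z \geq b_i$), and when it lies only in $M_\bgamma^-$ one appeals again to Lemma~\ref{distance} to obtain the missing $2\lambda_C$-spacing.

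The main obstacle I anticipate is not in the algebraic structure of the argument but in the proper handling of the boundary cases $a_i = a$ and $b_i = b$, where by definition $c_i = c$ and $d_i = d$ rather than the shifted values. To reconcile these with the generic case I will use the fact, recalled at the beginning of Section~3, that an \icm plan in one dimension is induced by a non-decreasing map $T$ with $T(a^+) = c$ and $T(b^-) = d$; combined with the observation that, since $\mu$ has no atoms, the support of $\bgamma$ at $x = a$ reduces to a single point $(a, T(a^+))$, this forces $c = a + \lambda_C$ whenever $a \in M_\bgamma^+$ (and symmetrically at $b$), so that the boundary formulas become instances of the generic ones and the case analysis closes uniformly.
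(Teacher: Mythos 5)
Your proposal is correct and follows essentially the same route as the paper: classify the endpoints $a_i,b_i$ (and the shift candidates) according to whether they lie in $M_\bgamma^+$, $M_\bgamma^-$, or $\{a,b\}$, observe that all but one combination close trivially, and use the $2\lambda_C$ separation for the remaining one. For the disjointness, the paper writes out the \icm\ inequality $|b_i+\lambda_C-a_j|\geq\lambda_C$ directly for the pair $(b_i,b_i+\lambda_C),(a_j,a_j-\lambda_C)\in\supp\bgamma$ and derives a contradiction, whereas you route it through Lemma~\ref{distance}; the two are equivalent since the lemma is nothing but this inequality repackaged. For the second assertion the paper argues in contrapositive form (pick $\bar y\in\,]c_i,d_i[$ and show $\bar y-\lambda_C\notin M_\bgamma^+$) and for the case $\bar x\geq b_i$ avoids any subcase split by noting that $(b_i,d_i)\in\supp\bgamma$ in all branches of the definition, hence $d_i\leq b_i+\lambda_C$; your corresponding case $z\geq b_i$ also closes without Lemma~\ref{distance}, so invoking it there is harmless but superfluous. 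One place your proposal is actually more complete than the paper's own proof: in the subcase $\bar x=a_i$ the paper simply asserts that $c_i=a_i+\lambda_C$ ``is also true in the case $a_i=a\in M_\bgamma^+$'' without argument, while you correctly supply the justification via the monotone-map representation of $\bgamma$, which forces $c=a+\lambda_C$ whenever $a\in M_\bgamma^+$ (because $\supp\bgamma$ is a monotone set, any $(a,y)\in\supp\bgamma$ must have $y=c$).
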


\begin{proof} Fix $i \neq j \in I$; assume for instance that $a_i<b_i \leq a_j<b_j$. It is then enough to prove that $d_i \leq c_j$. From the definition of $d_i,c_j$, it turns out that the only case we need to treat is
$$ d_i  = b_i+\lambda_C \quad\text{and}\quad c_j = a_j-\lambda_C.  $$
This case corresponds to to
$$ a_j \neq a, \, a_j \in M_\bgamma^-\setminus M_\bgamma^+,\, b_i \neq b,\, \in M_\bgamma^+\setminus M_\bgamma^- $$
and in particular implies that $b_i < a_j$ (otherwise these equal points would belong to $M_\bgamma^+\cap M_\gamma^-$). From the fact that the points $(b_i,b_i+\lambda_C)$ and $(a_j,a_j-\lambda_C)$ both belong to the support of the \icm plan $\bgamma$, we then deduce
\begin{equation} |b_i+\lambda_C-a_j| \geq \lambda_C. \label{ICMajbi} \end{equation}
Assume now that $d_i > c_j$, {\it i.e.}~that $b_i+\lambda_C > a_j-\lambda_C$. 
Then, since moreover $b_i-a_j >0$, we deduce that the inequality~\eqref{ICMajbi} does not hold, a contradiction. \smallskip

As for the second point, let $i \in I$, consider $\bar y \in (c_i,d_i)$ and let us prove that $\bar x := \bar y-\lambda_C$ does not belong to $M_\bgamma^+$. By definition of $(a_i,b_i)$, this is clearly true if $a_i < \bar x < b_i$, thus we only need to consider the cases $\bar x \leq a_i$ and $\bar x \geq b_i$.

Assume first that $\bar x \leq a_i$. If this is an equality, this implies $a_i \in M_\bgamma^+$ thus $c_i = a_i+\lambda_C$ (this is also true in this case $a_i = a \in M_\bgamma^+)$: since $\bar y = \bar x+\lambda_C = a_i+\lambda_C > c_i$, this is impossible. If now the inequality $\bar x\leq a_i$ is strict, then one has
$$ \bar x = \bar y-\lambda_C < a_i \quad \text{and}\quad c_i < \bar y $$
from which we deduce $c_i-\lambda_C < a_i$. In particular, we are in the cases ``$a_i = a$'' or ``$a_i \in M_\bgamma^-\setminus M_\bgamma^+$'' of the definition of $c_i$. If $a_i = a$, the inequality $\bar x<a_i$ leads to $\bar x \notin \supp\mu$ so that $\bar x \notin M_\bgamma^+$, as expected; if now $a_i \in M_\bgamma^-\setminus M_\bgamma^+$, then from Lemma~\ref{distance}, we deduce
$$ (a_i-2\lambda_C,a_i)\cap M_\bgamma^+ = \emptyset. $$
Since $\bar x < a_i$, $c_i < \bar y$, $c_i = a_i-\lambda_C$ and $\bar y = \bar x+\lambda_C$, it is easy to see that $\bar x \in (a_i-2\lambda_C,a_i)$, so that it cannot belong to $M_\bgamma^+$.

In the case $\bar x \geq b_i$ we have
$$ \bar x \geq b_i,\quad \bar x = \bar y-\lambda_C \quad\text{and}\quad \bar y < d_i $$
from which we deduce $d_i>b_i+\lambda_C$. But from the definition of $d_i$, we can see that $(b_i,d_i)$ must belong to the support of $\bgamma$ so that $d_i\leq b_i+\lambda_C$, which is a contradiction. \smallskip

The last part of Lemma~\ref{LemDisjoint}, namely the fact that $\bar y+\lambda_C$ never belongs to $M_\bgamma^-$ for any $\bar y \in (c_i,d_i)$, can be proven in a very similar way. \end{proof}

Having at hand the last notations and the result of Lemma~\ref{LemDisjoint}, we can now state the structural result which characterizes all the one-dimensional optimal plans for~\eqref{KantInf}.

\begin{thm} For each $i$, denote by
$$ \mu_i := \mu\res[a_i,b_i] \quad\text{and}\quad \nu_i = \nu\res[c_i,d_i]. $$
Then, for each $i$, $\mu_i$ and $\nu_i$ have same total mass. Moreover, the optimal plans $\gamma$ for the problem~\eqref{KantInf} are exactly those which can be written as
\begin{equation} \label{EqnStructOptPlans} \gamma = (\id\times(\id+\lambda_C))_\#(\mu\res M_\bgamma^+) + (\id\times(\id-\lambda_C))_\#(\mu\res M_\bgamma^-) + \sum\limits_i \gamma_i \end{equation}
where, for each $i$, $\gamma_i$ is a transport plan from $\mu_i$ to $\nu_i$ such that $|y-x|\leq \lambda_C$ for $\gamma_i$-a.e.~$(x,y)$.
\end{thm}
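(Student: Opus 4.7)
My plan is to use the Kantorovich potentials $(\phi_r,\psi_r)$ of Theorem~\ref{theophipsi} to constrain the support of any optimal $\gamma$: optimality of both $\gamma$ and $(\phi_r,\psi_r)$ forces $\gamma$ to live on $\{(x,y)\,:\,y \in B_r(x)\}$, and Proposition~\ref{PropBr}, together with a symmetric version obtained by applying the construction of Section~3 to the reflected plan $\chi_\#\bgamma$ (with $\chi(x,y)=(-x,-y)$, as in Step~IV of the proof of Theorem~\ref{ThmMgamma}), pins down the behavior on $M_\bgamma$ and confines $\gamma$ over each complementary interval $(a_i,b_i)$ to the rectangle $[a_i,b_i]\times[c_i,d_i]$.

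\emph{Rigidity on $M_\bgamma^\pm$.} Since $\phi_r(x)+\psi_r(y) \leq 0$ whenever $|y-x|\leq\lambda_C$ and $\int\phi_r\dx\mu + \int\psi_r\dx\nu=0$, optimality of $\gamma$ yields $\phi_r(x)+\psi_r(y)=0$ for $\gamma$-a.e.~$(x,y)$, together with the analogous equality for the reflected potential pair; thus $\gamma$ concentrates on the intersection of $\{y\in B_r(x)\}$ with its reflected counterpart. By Proposition~\ref{PropBr} this forces $y=\bar x+\lambda_C$ for $(\bar x,y)\in\supp\gamma$ with $\bar x\in M_\bgamma^+$, and symmetrically $y=\bar x-\lambda_C$ when $\bar x\in M_\bgamma^-$. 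Combined with the first-marginal condition, I obtain $\gamma\res(M_\bgamma^+\times\R) = (\id\times(\id+\lambda_C))_\#(\mu\res M_\bgamma^+)$ and the analogous identity on $M_\bgamma^-$. Lemma~\ref{finite} and the atomlessness of $\mu$ then yield the decomposition $\gamma = \gamma\res(M_\bgamma^+\times\R) + \gamma\res(M_\bgamma^-\times\R) + \sum_i \gamma_i$ with $\gamma_i := \gamma\res([a_i,b_i]\times\R)$, whose first marginal is $\mu_i$.

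\emph{Confinement and marginal identification.} For $x\in(a_i,b_i)$ and $y$ with $(x,y)\in\supp\gamma_i$, a case analysis on $a_i$ --- mirroring the definition of $c_i$ --- gives $y\geq c_i$: if $a_i\in M_\bgamma^+$, Proposition~\ref{PropBr} yields $y\geq a_i+\lambda_C=c_i$ for $x<a_i+2\lambda_C$, and $y\geq x-\lambda_C\geq c_i$ otherwise; if $a_i\in M_\bgamma^-\setminus M_\bgamma^+$, then $y\geq x-\lambda_C > a_i-\lambda_C=c_i$; if $a_i=a$, then $y\geq c$ since $y\in\supp\nu\subseteq[c,d]$. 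The upper bound $y\leq d_i$ is symmetric. To identify $(\pi_2)_\#\gamma_i$ with $\nu_i$, I first apply the preceding rigidity analysis to $\bgamma$ itself: its second marginal being $\nu$, this yields
\[ \nu = (\id+\lambda_C)_\#(\mu\res M_\bgamma^+) + (\id-\lambda_C)_\#(\mu\res M_\bgamma^-) + \sum_i (\pi_2)_\#\bgamma_i, \]
whose pieces have essentially disjoint supports (namely $M_\bgamma^++\lambda_C$, $M_\bgamma^--\lambda_C$, and the intervals $[c_i,d_i]$) thanks to Lemma~\ref{LemDisjoint}, so the decomposition is unambiguous and forces $(\pi_2)_\#\bgamma_i = \nu\res[c_i,d_i] = \nu_i$ (the boundary points being $\nu$-null). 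In particular $|\mu_i|=|\nu_i|$, settling the first claim. Applying the same unique-decomposition argument to the general $\gamma$ gives $(\pi_2)_\#\gamma_i=\nu_i$, hence $\gamma_i\in\Pi(\mu_i,\nu_i)$.

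\emph{Converse and main obstacle.} Conversely, any $\gamma$ built from~\eqref{EqnStructOptPlans} with $\gamma_i\in\Pi(\mu_i,\nu_i)$ satisfying $|y-x|\leq\lambda_C$ $\gamma_i$-a.e.~has marginals $\mu$ and $\nu$ by the two decompositions established above, and maximum displacement at most $\lambda_C$, hence is optimal. The main obstacle is the decomposition of $\nu$ just used: the required essential covering of $\supp\nu$ by $M_\bgamma^++\lambda_C$, $M_\bgamma^--\lambda_C$, and the $[c_i,d_i]$ is not a hypothesis but must be extracted from $\bgamma$ itself through the rigidity step, with Lemma~\ref{LemDisjoint} providing the disjointness that makes this decomposition unambiguous.
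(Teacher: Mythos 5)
Your argument is correct and follows the same overall strategy as the paper (decompose $\gamma$ over $M_\bgamma^\pm$ and the intervals $(a_i,b_i)$, pin down the behavior via the potentials from Theorem~\ref{theophipsi}, then identify the marginals of the $\gamma_i$ using Lemma~\ref{LemDisjoint}), but it differs in two places in a way worth noting. For the rigidity on $M_\bgamma^\pm$, you invoke Proposition~\ref{PropBr} directly, using $B_r(\bar x)=\{\bar x+\lambda_C\}$ to conclude that $\gamma\res(M_\bgamma^+\times\R)$ is concentrated on the line; the paper instead routes through the quarter-plane nullity (Step~I of Theorem~\ref{ThmMgamma}) and a left-density argument on $M_\bgamma^+$. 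Your shortcut is cleaner, but be careful with the phrasing ``forces $y=\bar x+\lambda_C$ for $(\bar x,y)\in\supp\gamma$'': the set $\{(x,y):y\in B_r(x)\}$ need not be closed (the potentials are only BV), so what you actually get is a $\gamma$-a.e. statement; the support containment then follows a posteriori because $\{y=x+\lambda_C\}$ \emph{is} closed. For the second marginals, you first apply rigidity and confinement to $\bgamma$ to produce a disjoint decomposition of $\nu$, then transfer it to general $\gamma$ via the disjointness in Lemma~\ref{LemDisjoint}; the paper argues by contradiction directly for $\gamma$ (if $(\pi_2)_\#\gamma_i<\nu_i$ on some $B\subset(c_i,d_i)$, no other piece can make up the difference, contradicting $(\pi_2)_\#\gamma=\nu$). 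These two routes are logically equivalent, and your ``decompose $\nu$ once, via $\bgamma$'' formulation is arguably more transparent, at the small cost of having to say explicitly why the decomposition of $\nu$ is unambiguous (which you do correctly flag as the key point).
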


\begin{proof} 

Let $\gamma \in \mathcal{O}_\infty(\mu,\nu)$, we will prove that it satisfies~\eqref{EqnStructOptPlans}. Again we divide the proof in several steps.

{\it \underline{Step I:} the measure $\gamma\res(M_\bgamma^+\times\R)$ (resp.~$\gamma\res(M_\bgamma^-\times \R)$) is concentrated on the line $\{y=x+\lambda_C\}$ (resp.~$\{y=x-\lambda_C\}$).} Let $(\bar x,\bar y)$ be such that
$$ \text{for any }\eps>0,\; \gamma\Big(([\bar x-\eps, \bx + \eps]\cap M_\bgamma^+)\times [\bar y\pm\eps]\Big) >0, $$
we prove that, for $\gamma$-a.e.~such $(\bar x,\bar y)$, $\bar y$ is necessarily equal to $\bar x + \lambda_C$. Assume by contradiction that $\bar y <\bar x+\lambda_C$ (the converse inequality can not hold since $\gamma \in \mathcal{O}_\infty(\mu,\nu)$); without loss of generality, we may assume that $\bar x$ is not a left-isolated point of $M_\bgamma^+$, since such points are countably many. Select then $\tilde x \in M_\bgamma^+$ such that
$$ 0 < \bar x-\tilde x < (\bar x+\lambda_C)-\bar y. $$
Since $\tilde x \in M_\bgamma^+$ by Th.~\ref{ThmMgamma} implies that
$$ \gamma([\tilde x,+\infty)\times(-\infty,\tilde x+\lambda_C]) = 0. $$
In particular, for $\eps>0$ such that $\bar x-\eps> \tilde x$ and $\bar y < (\tilde x+\lambda_C)-\eps$, we obtain $\gamma([\bar x-\eps, \bx+\eps]\times [\bar y-\eps, \by+\eps]) = 0$, in contradiction withthe starting point. This proves that $\gamma\res (M_\bgamma^+\times \R)$ is concentrated on the line $\{ y = x+\lambda_C\}$; the same for $\gamma\res (M_\bgamma^-\times \R)$ can then be obtained by considering the ``opposite transport plans'' $\bgamma_{\text{neg}},\gamma_{\text{neg}}$, as in Step~IV of the proof of Theorem~\ref{ThmMgamma}.

{\it \underline{Step II:} for any $i$, the plan $\gamma\res([a_i,b_i]\times \R)$ is concentrated on $[a_i,b_i]\times[c_i,d_i]$.} Let $i \in I$, first we prove that
\begin{equation} \label{GammaAiCi} \gamma\res([a_i,+\infty)\times \R) \text{ is concentrated on } [a_i,+\infty)\times [c_i,+\infty). \end{equation}
By definition of $c_i$, this is obvious if $a_i = a$; moreover, in the case $c_i = a_i-\lambda_C$, we have for $\gamma$-a.e.~$(x,y)$ with $x \geq a_i$:
$$ y \geq x -\lambda_C \geq a_i-\lambda_C = c_i.  $$
Then we need to look at the case $c_i = a_i+\lambda_C$, which corresponds to $a_i \in M_\bgamma^+$, and this, again, follows from the first step of Th.~\ref{ThmMgamma}.

To conclude we have to prove that
\begin{equation} \label{GammaBiDi} \gamma\res((-\infty,b_i]\times \R) \text{ is concentrated on } (-\infty,b_i]\times (-\infty,d_i]. \end{equation}
This is again obvious in the cases $b_i = b$ and $d_i = b_i+\lambda_C$, we thus have only to consider the case $d_i = b_i-\lambda_C$, which enforces $b_i \in M_\bgamma^-$. Again, it suffices to consider the negative transport plans $\gamma_{\text{neg}},\bgamma_{\text{neg}}$: observing then that $ -b_i \in M_{\bgamma_{\text{neg}}}^+ $, we apply Prop.~\ref{PropBr} to $\gamma_\neg,\bgamma_\neg$ to deduce
$$ \gamma_\neg([-b_i,+\infty)\times(-\infty,-d_i]) = 0  $$
which proves~\eqref{GammaBiDi}. \smallskip

{\it \underline{Step III:} conclusion.} Let us define
$$ \gamma^- := \gamma\res(M_\bgamma^-\times\R),\; \gamma^+ := \gamma\res(M_\bgamma^+\times \R) \;\text{and}\; \gamma_i := \gamma\res([a_i,b_i]\times \R) \text{ for each }i.  $$
It is clear that $\gamma = \gamma^-+\gamma^++\sum\limits_i \gamma_i$, and that $\gamma^-,\gamma^+$ have $\mu\res M_\bgamma^-,\mu\res M_\bgamma^+$ as respective first marginal. Moreover, Step~I proves that $\gamma^-$ (resp.~$\gamma^+$) is the transport plan induced by $\id-\lambda_C$ (resp.~$\id+\lambda_C$). Therefore, it remains to prove that, for each $i$, $\gamma_i$ has $\mu_i,\nu_i$ as marginals. Let us then fix $i\in I$; again the fact that $(\pi_1)_\#\gamma_i = \mu_i$ for each $i$ is a direct consequence of the definition of $\gamma_i$. Moreover, $\gamma_i$ is dominated by $\gamma$ and, by Step~II, is concentrated on $(a_i,b_i)\times (c_i,d_i)$: therefore
$$ (\pi_2)_\#\gamma_i \leq \Big((\pi_2)_\#\gamma\Big)\res (c_i,d_i) = \nu_i. $$
Assume now that this last inequality is strict: in other words, there exists a Borel set $B\subset (c_i,d_i)$ such that
$$ \gamma_i(\R\times B) < \nu(B). $$
By Lemma~\ref{LemDisjoint}, $B$ does not meet any $(c_j,d_j)$ with $j\neq i$ and neither meets $M_\bgamma^++\lambda_C$, $M_\bgamma^--\lambda_C$. Therefore the result of Step~I also implies
$$ \gamma^-(\R\times B) = \gamma^+(\R\times B) = \gamma_j(\R\times B) = 0 \quad\text{for any } j\neq i. $$
Adding all of these equalities, we obtain $\gamma(\R\times B)<\nu(B)$, which is impossible since $\gamma \in \Pi(\mu,\nu)$. \smallskip

This proves that any optimal plan satisfies~\eqref{EqnStructOptPlans}. In particular, this implies that $\mu_i$ and $\nu_i$ have same total mass for any $i$, and that
$$ \mu\res M_\bgamma^++\mu\res M_\bgamma^- + \sum\limits_i \mu_i = \mu,\quad \nu\res(M_\bgamma^++\lambda_C)+\nu\res(M_\bgamma^--\lambda_C)+ \sum\limits_i \nu_i = \nu.$$
For any plan satisfying~\eqref{EqnStructOptPlans} and belonging to $\Pi(\mu,\nu)$, the confinement betwen the two lines $y=x+\lambda_C$ and $y=x-\lambda_C$ inplies the optimality, concluding the proof.
\end{proof}


\end{document}